\def\RR{{\mathbb R}}
\newcommand{\h}{\hspace*{.24in}}
\newcommand{\hhh}{\hspace*{.72in}}
\newtheorem{thm}{Theorem}[section]
\newtheorem{lem}{Lemma}[section]
\newtheorem{rem}{Remark}[section]
\newtheorem{cor}{Corollary}[section]
\title{Convergence of Griddy Gibbs Sampling  and other perturbed Markov chains}
\author{
Vu Dinh\thanks{Fred Hutchinson Cancer Research Center, Seattle, Washington, USA}
\and Ann E. Rundell\thanks{Weldon School of Biomedical Engineering, Purdue University, West Lafayette, Indiana, USA}
\and Gregery T. Buzzard\thanks{Department of Mathematics, Purdue University, West Lafayette, Indiana, USA}
}
\date{August 16, 2016}
\begin{document}

\maketitle

\begin{abstract}
The Griddy Gibbs sampling was proposed by Ritter and Tanner (1992) as a computationally efficient approximation of the well-known Gibbs sampling method. The algorithm is simple and effective and has been used successfully to address problems in various fields of applied science. However, the approximate nature of the algorithm has prevented it from being widely used: the Markov chains generated by the Griddy Gibbs sampling method are not reversible in general, so the existence and uniqueness of its invariant measure is not guaranteed. Even when such an invariant measure uniquely exists, there was no estimate of the distance between it and the probability distribution of interest, hence no means to ensure the validity of the algorithm as a means to sample from the true distribution.

In this paper, we show, subject to some fairly natural conditions, that the Griddy Gibbs method has a unique, invariant measure.  Moreover, we provide $L^p$ estimates on the distance between this invariant measure and the corresponding measure obtained from Gibbs sampling.  These results provide a theoretical foundation for the use of the Griddy Gibbs sampling method. We also address a more general result about the sensitivity of invariant measures under small perturbations on the transition probability. That is, if we replace the transition probability $P$ of any Monte Carlo Markov chain by another transition probability $Q$ where $Q$ is close to $P$, we can still estimate the distance between the two invariant measures. The distinguishing feature between our approach and previous work on convergence of perturbed Markov chain is that by considering the invariant measures as fixed points of linear operators on function spaces, we don't need to impose any further conditions on the rate of convergence of the Markov chain. For example, the results we derived in this paper can address the case when the considered Monte Carlo Markov chains are not uniformly ergodic.
\end{abstract}

\maketitle

\newpage

\section{Introduction}

The need to generate samples from a probability function or estimate moments of such a distribution arises in many fields of applied science, including Bayesian statistics, computational physics, computational biology and computer science. A common difficulty in generating such samples is that the distribution (hereafter denoted by $\pi$) may be high-dimensional and computationally intractable.  To resolve this problem, many sampling-based approaches have been proposed: the basic idea is to construct a Markov chain with a tractable transition mechanism that has $\pi$ as its invariant distribution.

One of the most widely applicable methods to construct such a Markov chain is the method of Gibbs sampling. This algorithm generates an instance from the distribution of each variable in turn, conditional on the current values of the other variables. This reduces the sampling problem to a series of one-dimensional problems. The method of Gibbs sampling is very computationally effective, especially in the case when $\pi$ is high-dimensional. Gibbs sampling applies even in the case that the distribution is known only up to a normalizing constant, which occurs commonly in fitting models to data.  

However, the use of the Gibbs sampling method is hindered by several factors. 
First, the method requires the one-dimensional conditional densities to be known, or at least to be easy to sample directly. 
In most contexts, such knowledge about the conditional densities is usually not available. 
Second, in many fields of applied sciences, sampling from the conditional distributions is computationally expensive, despite the fact that they are one-dimensional. 
For instance, in systems biology, evaluating (up to a normalizing factor) the value of the distribution function $\pi$ at one point might be equivalent to solving a high-dimensional system of differential equations. 
These high computational costs pose a serious problem in applying the algorithm in practice, which motivates the development of sampling methods which uses approximation of the full distribution to alleviate this difficulty.

To address these issues, Ritter and Tanner (1992) proposed in \cite{RT} an approximate method -- the Griddy Gibbs method -- as an alternative. The Griddy Gibbs sampling method evaluates the conditional density on a grid and uses piecewise linear or piecewise constant functions to approximate the cumulative distribution function of the conditional distributions based on these grid values.  The resulting distribution is used to generate random variables with approximately the right distribution.

This method has been used successfully to address problems in various fields of applied science: statistical modeling and inference \cite{A9,A10, YH, KD1, KD2, POW}, machine learning \cite{MV}, chemical analysis \cite{A14}, systems biology \cite{D,DRB1, DRB2}, medical science \cite{A1}, statistical computing and data analysis \cite{A6,A8}, economics \cite{A2,A11,A13}, ecological modelling \cite{A12}, acoustics \cite{A3}, and time series analysis \cite{A4,A5,A7}. However, the approximate nature of the algorithm still prevents it from being widely used. The  approximation by linear or constant functions leads to theoretical questions about the ergodic properties of the constructed Markov chains and about the validity of the algorithm as a means to sample from the true distribution. 

Many adjustments to overcome the approximate nature of the algorithm have been proposed. In \cite{T}, a Metropolis chain is embedded in the algorithm to ensure that the equilibrium distribution is exactly $\pi$ even on a coarse grid. In \cite{L}, a similar strategy is proposed, in which the Multiple-try Metropolis algorithm is embedded in the sampling process. In both approaches, the convergence of the algorithms are guaranteed, but the computational costs increase considerably and the algorithms are more difficult to set up.
Another popular grid-based sampling algorithm is the adaptive rejection sampler (ARS), which is capable of sampling the distribution efficiently with a low number of grid points \cite{GW}. 
However, ARS-type algorithms are designed for log-concave (or near log-concave) distributions and in general does not work well for general distribution with multiple modes.
In all cases, the approximations are restricted to piecewise linear and piecewise constant functions. 

In this paper, we show, assuming that the approximations to the distribution are bounded from above and bounded away from zero, that the Griddy Gibbs method has a unique, invariant measure. Moreover, we provide $L^p$ estimates on the distance between this invariant measure and the corresponding measure obtained from Gibbs sampling.  Subject to appropriate hypotheses, our main results about Griddy Gibbs are the following.  

\begin{enumerate}
\item Although the Markov chains generated by the Griddy Gibbs sampler are not reversible in general, they admit unique invariant measures.
\item For $2 \le p \le \infty$, there is an $L^p$-estimate of the distance between the limit invariant measure and the correct distribution $\pi$, which guarantees the $L^p$-convergence of the algorithm.
\end{enumerate}

The first result is obtained using tools from the theory of Markov processes.  We then extend the Markov chain transition operator to $L^p$-spaces and use techniques from the theory of functional analysis on Hilbert vector spaces to prove the second result.  These results provide a theoretical foundation for the use of the Griddy Gibbs sampling method with some guarantees of convergence. 

In this paper, we go beyond these results for Griddy Gibbs sampling in two ways. First, the approximation scheme does not need to be piecewise linear or piecewise constant: any reasonable approximation scheme can be employed to obtain Griddy Gibbs sampling. In fact, in the case that the distribution is smooth but the computational cost of determining the value of the conditional distribution is much greater than the cost for approximation, high order polynomial interpolations are preferred since they increase the accuracy of the sampling process and reduce the number of function evaluations. 

Second, we generalize our method to give results about the sensitivity of invariant measures under small perturbations on the transition probability. That is, if we replace the transition probability $P$ of any Monte Carlo Markov chain by another transition probability $Q$ where $Q$ is close to $P$, can we still estimate the distance between the two invariant measures? 
Our paper provides a positive answer to this question, given some mild conditions imposed on Q. 
The distinguishing feature between our approach and other work \cite{R1,R2,R3} on convergence of perturbed Markov chain is that by considering the invariant measures as fixed points of linear operators on function spaces, we don't need to impose any further conditions on the rate of convergence of the Markov chain. 
For example, the results we derive in this paper can address the case when the considered Monte Carlo Markov chains are not uniformly ergodic.
The result about the sensitivity of invariant measures under small perturbations thus guarantees the convergence and provides a way to assess the efficiency of  Griddy Gibbs sampler and other perturbed Markov chains methods. 

The paper is organized as follows. Section 2 provides the mathematical framework used in the paper, as well as descriptions of the Gibbs and Griddy Gibbs sampling methods. Section 3 discusses the existence, uniqueness and regularity results for the invariant measure. We develop in Section 4 results about the sensitivity of invariant measures under small perturbations on the transition probability for general non-uniformly ergodic Monte Carlo Markov chains. The estimates are then extended to the case when the distribution of interest has non-compact support in Section 5. Finally, we provide in Section 6 numerical examples to illustrate our theoretical findings and demonstrate the utility of the Griddy Gibbs sampling method.

\section{Mathematical framework}

The problem addressed by the Gibbs algorithm is the following (see \cite{RR} for reference). We are given a density function $\hat{\pi}$, on a state space with bounded Lebesgue measure $D \subset \RR^d$. This density gives rise to an absolutely continuous probability measure $\pi$ on D, by
\[
\pi(A)=\int_{A}{\hat{\pi}(x)dx}, ~~ \forall A \in \mathcal{B}
\]
where $\mathcal{B}$ denotes the $\sigma$-algebra of Borel sets on D. Without loss of generality, we assume throughout this paper that the the distribution $\pi$ has finite variance. In other words, the density function $\hat \pi \in L^2(D)$.

In many applications, we want to estimate the expectations of functions $\phi:D \to \RR$ with respect to $\pi$, i.e. we want to estimate
\[
\pi(\phi)=E_{\pi}[\phi(X)]=\int_{D}{\phi(x) \hat{\pi}(x)dx}.
\]
If $D$ is high-dimensional, and $\hat \pi$ is a complicated function, then direct integration (either analytic or numerical) of these integrals is infeasible. 

The classical Monte Carlo solution to this problem is to simulate independent and identically distributed random variables $X_1,X_2,...,X_N$ with distribution $\pi$ and then estimate $\pi(\phi)$ by
\begin{equation} 
\label{sumPhi}
\pi_N(\phi)= \frac{1}{N} \sum_{i=1}^{N}{\phi(X_i)}.
\end{equation}
This gives an unbiased estimate with standard deviation of order $O(1/\sqrt{N})$. However, if $\pi_u$ is complicated, it is difficult to directly simulate i.i.d. random variables from $\pi$. 
The Markov chain Monte Carlo (MCMC) approach is introduced instead to construct on $D$ a Markov chain that is computationally efficient and that has $\pi$ as a stationary distribution. 
That is, we want to define easily-simulated Markov chain transition probabilities $P(x,y)$ for $x,y \in D$ such that
\[
\int_{D}{\hat{\pi}(y)P(x,y)dy}=\hat{\pi}(x), ~~ for ~ a.e. ~x \in D.
\]
In principle, if we run the Markov chain (started from anywhere) to obtain samples $X_n$, then for large $n$ the distribution of $X_n$ will be approximately stationary, and the sequence $\{X_n\}$ can be used to estimate $\pi(\phi)$ as in equation \eqref{sumPhi}. 

\subsection{Gibbs transition}
The Gibbs transition is a transition probability on $D$ defined as follows. 
First, the $i^{th}$ component Gibbs transition $P_i$ leaves all components except the $i^{th}$ component unchanged and replaces the $i^{th}$ component by a draw from the full distribution $\pi$ conditional on all other components:
\[
	P_i(x_1,\ldots,x_i,\ldots,x_d)= \frac{\hat \pi(x_1,\ldots,x_i,\ldots,x_d)}{\int_{-\infty}^{\infty}{\hat \pi(x_1,\ldots,t,\ldots,x_d) dt}},
\]
where $t$ appears in the $i^{th}$ position.
The transition probability of the Gibbs sampler is defined as
\[
P(x,y)= P_1(y_1,x_2,\ldots,x_d)P_2(y_1,y_2,x_3,\ldots,x_d) \cdots P_d(y_1,y_2,\ldots,y_d)
\]
where $x=(x_1,x_2,\ldots,x_d)$ and $y=(y_1,y_2,\ldots,y_d)$.

Now let $\{X_n\}, n \ge 0$ be a time-homogeneous Markov process generated by the Gibbs sampling algorithm with transition probability $P$.  
We have
\[
P(X_n \in A| X_0 = a) = P^n(a,A), ~~ \forall A \in \mathcal{B}
\]
where $P^n$ is defined recursively by
\[
P^1 = P , \h  P^n(a,y) = \int_{D}{P(x,y) P^{n-1}(a, x)dx}.
\]
We also define the transition operator $T$ on $\mathcal{P}$(D), the space of probability measures on D, by
\begin{equation}
T\mu(A)=\int_D{P(x,A)\mu(dx)}.
\label{E1}
\end{equation}
This transition operator can also be considered as a linear operator on $L^p(D)$, $1 \le p \le \infty$, by defining
\[
Tf(y)=\int_D{P(x,y)f(x)dx}
\]
Moreover, the operator $T^n$ obtained by replacing $P$ by $P^n$ in ($\ref{E1}$) is equal to the operator obtained by applying $T$ $n$ times, $T^n = T \circ T \circ ...\circ T$.
	
\subsection{Ergodic properties of the Markov chains generated by the Gibbs sampling}
By standard results about ergodicity of Gibbs sampling method, we know that under rather general conditions, $T$ admits a unique invariant measure, which is the distribution $\pi$ that we want to sample, i.e. $T \pi=\pi$. Moreover, the distribution of ${X_n}$ converges in total variation norm to $\pi$. We state here Theorem 6 from \cite{A} that justifies the convergence of the Gibbs sampling method:
\begin{thm}[\cite{A}]
Assume that for each $1 \le i \le d$, the conditional distributions $\pi(X_i | X_j,j\neq i)$ have densities, say $p_i$, with respect to some dominating measure $\rho_i$. Suppose further that for each $1 \le i \le d$, there is a set $A_i$ with $\rho_i(A_i)>0$, and a $\delta>0$ such that for each $1 \le i \le d$ 
\begin{enumerate}
	\item $\pi(X_i=x_i | X_j=x_j,j\neq i)>0$
	whenever
	$x_k \in A_k$ for all $k \leq i$ and $ x_{i+1},...,x_d$  arbitrary.
	\item $\pi(X_i=x_i | X_j=x_j,j\neq i)>\delta$
	whenever
	$x_k \in A_k$ for all $k \leq d$.
\end{enumerate}
Then for $\pi$-a.e. $x \in D$, we have $\sup_{C \in \mathcal{B}}{|P^n(x,C)- \pi(C)| } \to 0$. 
\label{TheoGibb}
\end{thm}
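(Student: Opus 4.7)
The plan is to verify three structural properties of the Gibbs kernel $P$ and then invoke a classical convergence theorem (e.g.\ Meyn--Tweedie or Nummelin's splitting theorem). The three properties are: (i) $\pi$ is invariant under $P$; (ii) the product set $A := A_1 \times \cdots \times A_d$ is a ``small set'' in the sense that $P$ admits a Doeblin-type minorization on $A$; and (iii) $A$ is accessible from every starting point, i.e.\ $P(x,A) > 0$ for all $x \in D$. Property (i) is the standard invariance of the systematic-scan Gibbs sampler, which follows directly from the construction of the one-dimensional updates $P_i$.

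For (ii), the key observation is that whenever $x \in A$ and $y \in A$, every argument appearing in each factor $P_i(y_1,\ldots,y_i,x_{i+1},\ldots,x_d)$ lies in the corresponding set $A_k$: the first $i$ arguments are the $y_k$ and the last $d-i$ are the $x_k$. Condition~2 therefore gives
\[
P(x,y) \;=\; \prod_{i=1}^d P_i(y_1,\ldots,y_i,x_{i+1},\ldots,x_d) \;\geq\; \delta^d, \qquad x,y \in A,
\]
which yields the small-set minorization $P(x,B) \geq \delta^d \rho(A \cap B)$ for all $x \in A$ and all measurable $B$, where $\rho = \rho_1 \otimes \cdots \otimes \rho_d$. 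For (iii), I would integrate over the successive one-dimensional updates: for arbitrary $x \in D$ and $y$ in the domain of integration $A$, the $i$-th factor $P_i(y_1,\ldots,y_i,x_{i+1},\ldots,x_d)$ has its first $i$ coordinates in $A_1,\ldots,A_i$ and the remaining coordinates arbitrary, so condition~1 gives pointwise positivity of this factor. The integrand is therefore strictly positive throughout $A$, and since $A$ has positive Lebesgue measure, $P(x,A) > 0$.

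With $A$ a small set that is accessible from every starting point, the chain is $\psi$-irreducible (with $\psi = \rho|_A$), and aperiodicity is automatic because the minorization already acts at step one, ruling out cycles of length greater than one. The invariance of $\pi$ together with accessibility and the one-step minorization on $A$ then imply, by a standard coupling/regeneration argument, that $\sup_{C \in \mathcal{B}} |P^n(x,C) - \pi(C)| \to 0$ for $\pi$-a.e.\ $x$, and in fact one gets geometric ergodicity. In my view, the main technical obstacle is not the ergodicity machinery but rather the careful handling of the ``$\pi$-a.e.\ $x$'' qualifier: since the conditional densities $p_i$ and hence the kernel $P$ are only defined up to $\pi$-null sets, one must restrict attention to the full-measure subset of $D$ on which the pointwise positivity statements in conditions~1--2 and the pathwise construction of the Gibbs chain hold simultaneously.
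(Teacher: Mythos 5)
This statement is not proved in the paper at all: it is quoted verbatim as Theorem~6 of Athreya et al.\ \cite{A} and used as an imported hypothesis (the paper even says afterwards that it will simply \emph{assume} $\pi$ satisfies these conditions). So there is no in-paper argument to compare against; what you have written is a reconstruction of the standard proof, and it is essentially the right one --- indeed it follows the same minorization/irreducibility route used in \cite{A}. Your verification of the one-step minorization $P(x,y)\ge\delta^d$ for $x,y\in A$ via condition~2, and of the accessibility $P(x,A)>0$ for arbitrary $x$ via condition~1, correctly tracks which arguments of the $i$-th factor are ``already updated'' ($y_1,\dots,y_i$) versus ``not yet updated'' ($x_{i+1},\dots,x_d$); combined with invariance of $\pi$ and strong aperiodicity (the small set $A$ charges itself in one step), the aperiodic ergodic theorem for positive recurrent $\psi$-irreducible chains gives total-variation convergence for $\pi$-a.e.\ starting point, which is exactly the stated conclusion. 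Two small corrections: the measure of $A$ that matters is $\rho(A)=\prod_i\rho_i(A_i)>0$, not Lebesgue measure, since the dominating measures $\rho_i$ are arbitrary; and your parenthetical claim that ``in fact one gets geometric ergodicity'' does not follow --- the minorization holds only on $A$ and $P(x,A)>0$ is not uniform in $x$, so without a drift condition you get ergodicity but not a geometric rate. Neither issue affects the stated conclusion.
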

In the rest of the paper, we will assume that the distribution of interest $\pi$ satisfies conditions of Theorem $\ref{TheoGibb}$ and has finite variance.

\subsection{Griddy Gibbs transition and Griddy Gibbs sampler}
In the Griddy Gibbs sampling method, at each point in the sampling space and on each dimension, we use some approximation scheme to approximate $P_i$. 
The $i^{th}$ component Griddy Gibbs transition leaves all components except the $i^{th}$ component unchanged and replaces the $i^{th}$ component by a draw from $Q_i$ that approximates the conditional expectation on all other components, i.e., 
\begin{equation}
Q_i(x_1,...,y_i,...,x_d)\approx \frac{\hat \pi(x_1,...,y_i,...,x_d)}{\int_{-\infty}^{\infty}{\hat \pi(x_1,...,t,...,x_d) dt}}. 
\label{21}
\end{equation}

The Griddy Gibbs sampler is defined in a similar manner as in the definition of Gibbs sampler: starting with an initial value, we sequentially and randomly update the $i^{th}$ component while fixing other components of the variables. 
However, for each $i^{th}$ component, instead of the conditional distribution $P_i$ (which is unknown or difficult to compute), we use the approximation $Q_i$ as the guideline for the sampling process. 
The surrogate function $Q_i$ is obtained by evaluating the conditional density $P_i$ on a grid and uses some interpolation to approximate $P_i$ based on these grid values.
A simple one-dimensional simple, such as the inverse transform sampler, is then used to sample the $i^{th}$ component corresponding to $Q_i$. 

As with Gibbs sampler, the transition probability and transition operator of the Griddy Gibbs are defined as
\begin{equation}
Q(x,y)= Q_1(y_1,x_2,...,x_d)Q_2(y_1,y_2,...,x_d)...Q_d(y_1,y_2,...,y_d)
\label{22}
\end{equation}
and 
\[
Q^n(a,y) = \int_{D}{Q(x,y) Q^{n-1}(a, x)dx} \h \text{and} \h S\mu(A)=\int_{D}{Q(x,A)\mu(dx)}.
\]
We note that since the approximations on each dimension are different, the Markov chain $\{Y_n\}$ generated by Griddy Gibbs algorithm is not reversible in general.

Throughout this paper, we will use the notation $\{X_n\}$, T, P to describe a Markov chain generated by Gibbs sampling, its transition operator and its transition probability, respectively. The corresponding notations for Griddy Gibbs are $\{Y_n\}$, S, Q. A comparison between the notations used for the Gibbs sampling and Griddy Gibbs sampling is provided in Figure~\ref{fig1}.

\begin{figure}
\[
\text{Gibbs:}\hhh \{X_n\}\stackrel{T,P}{\longrightarrow} \pi
\]
\[
\text{Griddy Gibbs:}\h \{Y_n\}\stackrel{S,Q}{\longrightarrow} \eta
\]
\caption{\footnotesize Comparison between Gibbs sampling and Griddy Gibbs sampling: Although the two transition operators $P$ and $Q$ are close, the Markov chain $\{Y_n\}$ is not reversible in general, so the existence and uniqueness of the invariant measure $\eta$ is not guaranteed. Even when $\eta$ uniquely exists, an estimate of the distance between $\pi$ and $\eta$ is needed to guarantee the validity of the Griddy Gibbs sampling.}
\label{fig1}
\end{figure}

\section{Existence, uniqueness, and regularity of the invariant measure of a Monte Carlo Markov chain generated by the Griddy Gibbs sampling}

In this section, we will prove that the transition operator $S$ (obtained from the Griddy Gibbs algorithm as in ($\ref{21}$), ($\ref{22}$)) admits a unique invariant measure $\eta$, assuming that the approximations $Q_i$ are uniformly bounded above and away from zero:  
 \begin{equation} 
 \exists M, \epsilon >0, \text{ such that }  \epsilon \le Q_i(x) \le M, ~ \forall 1 \le i \le d, ~ \forall x\in D.
 \label{Con1}
 \end{equation}
We also prove that under this condition, $\eta$ is absolutely continuous with respect to  Lebesgue measure and admits a bounded density function. 
We note that condition \eqref{Con1} is general and does not hinder the application of the Griddy Gibbs sampling method, since we can always use additional cutoff functions on the approximation scheme to guarantee the boundedness from above and below of $f_i$, without significantly affecting the accuracy of the approximation scheme.

The outline of the proof is as follow. 
By verifying Doeblin's condition (see Theorem $\ref{C1}$), we prove the existence and uniqueness of the invariant measure $\eta$; moreover, the distribution of $\{Y_n\}$ (obtained by the Griddy Gibbs algorithm) converges to $\eta$ in total variation norm. 
Using this and Lemma $\ref{L1}$, we deduce that $\eta$ is absolutely continuous with respect to Lebesgue measure. Finally, using Lemma $\ref{L2}$, we prove that the density function of $\eta$ is bounded. 

\subsection{Existence and uniqueness}
To verify the existence and uniqueness of the invariant measure, we use the following result from \cite{KS} on the convergence of transition probabilities. As before, we will denote by $\mathcal{B}$ the $\sigma$-algebra of Borel sets on $D$.

\begin{thm}[\cite{KS}]
Suppose that the Markov chain ${Z_n}$ with transition probability $K(x,\cdot)$ satisfies the Doeblin condition:

$\exists k \in N, \epsilon >0$, and a probability measure $\phi$ on $(D,\mathcal{B})$ such that
\[
K^k(x,C) \ge \epsilon \phi(C), \forall x\in D, \forall C \in \mathcal{B}.
\]
Then there exists a unique invariant probability measure $\xi$ such that for all $n \in N$ and all $x \in D$,
\[
\sup_{C \in \mathcal{B}}{|K^n(x,C)- \xi(C)| } \le (1-\epsilon)^{((n/k) -1)}.
\]
\label{C1}
\end{thm}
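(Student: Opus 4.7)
The plan is to prove the Doeblin-condition theorem by a contraction-mapping argument on the space of probability measures equipped with the total variation metric. First I would reduce to the case $k=1$ by passing to the subsampled chain with transition kernel $\tilde K := K^k$; since the minorization hypothesis is stated precisely for $K^k$, this kernel satisfies $\tilde K(x,C) \ge \epsilon \phi(C)$ for all $x \in D$ and $C \in \mathcal{B}$. The existence, uniqueness, and contraction rate for the subsampled chain will immediately transfer back to the original chain because the TV distance between $K^n(x,\cdot)$ and any invariant measure is non-increasing in $n$ (as transition kernels are Markov contractions in TV), so bounds along the subsequence $n = mk$ will dominate the intermediate iterates.

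The key algebraic step is the Doeblin decomposition: since $\tilde K(x,C) - \epsilon \phi(C) \ge 0$ and integrates in $C$ to $1-\epsilon$, I can write
\[
\tilde K(x,C) = \epsilon \phi(C) + (1-\epsilon) R(x,C),
\]
where $R(x,\cdot)$ is itself a probability kernel. Let $\tilde T$ denote the associated operator on measures, $\tilde T \mu(C) = \int \tilde K(x,C)\,\mu(dx)$. For two probability measures $\mu,\nu$, the signed measure $\mu-\nu$ has total mass zero, so the $\epsilon\phi(C)$ term contributes nothing and
\[
\tilde T\mu(C) - \tilde T\nu(C) = (1-\epsilon)\int R(x,C)\,(\mu-\nu)(dx).
\]
Taking the Jordan decomposition $\mu-\nu = \rho_+ - \rho_-$ with $\rho_\pm(D) = \tfrac12\|\mu-\nu\|_{TV}$ and using $0 \le R(x,C) \le 1$, I obtain the contraction estimate $\|\tilde T \mu - \tilde T\nu\|_{TV} \le (1-\epsilon)\|\mu-\nu\|_{TV}$.

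With this contraction in hand, the existence and uniqueness of an invariant probability measure $\xi$ for $\tilde T$ follow from the Banach fixed point theorem, since the probability measures on $(D,\mathcal{B})$ form a complete metric space under $\|\cdot\|_{TV}$. The same $\xi$ is invariant for $T$ itself: indeed $T\xi$ is also a fixed point of $\tilde T = T^k$ (because $T$ and $\tilde T$ commute), hence $T\xi = \xi$ by uniqueness. Iterating the contraction from the initial measure $\delta_x$ yields $\|\tilde T^m \delta_x - \xi\|_{TV} \le (1-\epsilon)^m$, and writing an arbitrary $n$ as $n = mk + r$ with $0 \le r < k$, monotonicity of TV distance under $T$ gives $\|K^n(x,\cdot)-\xi\|_{TV} \le (1-\epsilon)^m \le (1-\epsilon)^{n/k - 1}$, which is the announced bound.

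The main technical obstacle I anticipate is the careful handling of the signed-measure argument underlying the contraction estimate: one must check that $R(x,\cdot)$ is a legitimate Markov kernel (measurability in $x$ included), verify that $\tilde T$ leaves the space of probability measures invariant in a genuinely complete metric, and justify passing the supremum over $C$ through the Jordan decomposition uniformly. Everything else is a mechanical combination of Banach's fixed point theorem and the elementary observation that subsampled bounds control the whole chain.
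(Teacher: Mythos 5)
Your proposal is correct, but note that the paper offers no proof of this statement at all: Theorem~\ref{C1} is quoted directly from the reference \cite{KS}, so there is no in-paper argument to compare against. Your minorization--contraction argument (write $K^k(x,C)=\epsilon\phi(C)+(1-\epsilon)R(x,C)$, observe that the $\epsilon\phi$ term annihilates mean-zero signed measures, conclude that $T^k$ contracts the total variation metric by the factor $1-\epsilon$, and invoke Banach's fixed point theorem on the complete space of probability measures) is the standard proof of Doeblin's theorem and is essentially the argument given in that reference; the bookkeeping at the end is also right, since $n=mk+r$ with $0\le r<k$ gives $m\ge n/k-1$, the initial distance $\sup_C|\delta_x(C)-\xi(C)|$ is at most $1$, and one-step kernels are weak contractions in total variation, so the claimed exponent $(1-\epsilon)^{(n/k)-1}$ follows.
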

Using this result, we can prove that under condition $\eqref{Con1}$, the distribution of the Markov chain $\{Y_n\}$ generated by the Griddy Gibbs sampling method converges to a stationary distribution $\eta$ in total variation norm.  
This is a direct analog of the convergence given in Theorem~\ref{TheoGibb} above (although we still have to show that $\eta$ is near $\pi$).  

\begin{thm} (Existence and uniqueness of the invariant measure for $S$)
Assume that the approximation scheme $\{f_i\}_{i=1}^d$ satisfies condition \eqref{Con1}. Then there exists a unique probability measure $\eta$ that is invariant under $S$, and this $\eta$ satisfies
\[
\sup_{C \in B}{|Q^n(x,C)- \eta(C)| } \to 0
\]
for all $x \in D$.
In other words, $\forall x \in D$, $Q^n(x,\cdot) \to \eta(\cdot)$ in total variation norm.

\label{T1}
\end{thm}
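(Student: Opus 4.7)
The plan is to verify Doeblin's condition directly and then invoke Theorem~\ref{C1}; in fact, the uniform lower bound \eqref{Con1} is strong enough that a single step suffices (i.e., $k=1$), so the argument reduces to a short, explicit minorization estimate.

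To carry this out, I will combine the pointwise lower bound $Q_i \ge \epsilon$ on each factor with the product structure of the joint transition density from \eqref{22}. This immediately gives
\[
Q(x,y) = \prod_{i=1}^{d} Q_i(y_1, \ldots, y_i, x_{i+1}, \ldots, x_d) \ge \epsilon^d
\]
for all $x, y \in D$. Integrating over an arbitrary $C \in \mathcal{B}$ produces $Q(x, C) \ge \epsilon^d \lambda(C)$, where $\lambda$ denotes Lebesgue measure on $D$. Since $D$ has finite Lebesgue measure, I define $\phi(C) := \lambda(C)/\lambda(D)$, which is a probability measure on $(D, \mathcal{B})$, and rewrite the bound as
\[
Q(x, C) \ge \bigl( \epsilon^d \lambda(D) \bigr)\, \phi(C), \qquad \forall\, x \in D,\ \forall\, C \in \mathcal{B}.
\]
With $\epsilon' := \epsilon^d \lambda(D) > 0$ and $k = 1$, this is exactly the Doeblin minorization required by Theorem~\ref{C1}. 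Applying that theorem yields a unique $S$-invariant probability measure $\eta$ together with the geometric bound $\sup_{C \in \mathcal{B}}|Q^n(x, C) - \eta(C)| \le (1 - \epsilon')^{n-1}$, which in particular delivers the stated total variation convergence.

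I do not anticipate any serious obstacle. The only subtle point is purely formal: one must check that each $Q_i$, as constructed by the Griddy Gibbs scheme, is a legitimate conditional density on the projection of $D$ onto the $i$-th coordinate axis, so that the product \eqref{22} is a bona fide transition density and the integration step above makes sense. This is however built into the definition of the sampler and into condition \eqref{Con1}. Beyond this, no hypothesis on reversibility, on the particular approximation scheme, or on the Gibbs chain itself is needed for this theorem; the subsequent challenge, quantifying how close $\eta$ is to $\pi$, is deferred to later sections.
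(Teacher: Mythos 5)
Your proposal is correct and follows essentially the same route as the paper: bound the product density below by $\epsilon^d$ using \eqref{Con1}, integrate to obtain the one-step Doeblin minorization, and invoke Theorem~\ref{C1} with $k=1$. Your version is in fact slightly more careful than the paper's, since you explicitly normalize Lebesgue measure to the probability measure $\phi(C)=\lambda(C)/\lambda(D)$ as Theorem~\ref{C1} requires, whereas the paper takes $\phi$ to be (unnormalized) Lebesgue measure.
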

\begin{proof}

We will prove that the transition probability $Q$ constructed in the Griddy Gibbs sampling algorithm satisfies Doeblin's condition of Theorem~\ref{C1}.  Recall that the transition probability in the Griddy Gibbs algorithm is given by
\[
Q(x,C)= \int_C{f_1(y_1,x_2,\ldots ,x_d)f_2(y_1,y_2,\ldots ,x_d)\cdots f_d(y_1,y_2,\ldots ,y_d)~dy_1 dy_2\ldots dy_d}.
\]
Recall that $f_i \geq \epsilon$ on $D$ from \eqref{Con1}.  Hence with $\text{Vol}(C)$ denoting the Lebesgue measure of $C$, we have
\[
Q(x,C) \ge \epsilon^d \text{Vol}(C), \forall x\in D, \forall C \in B.
\]
This is Doeblin's condition with $k=1$, $\phi$ is the Lebesgue measure on $D$, so applying Theorem $\ref{C1}$, we have $\sup_{C \in \mathcal{B}}{|Q^n(x,C)- \eta(C)| } \to 0$. 
\end{proof}

\subsection{Some supporting lemmas}

To establish results about regularity of the invariant measure $\eta$ of Markov chains generated by Griddy Gibbs sampling, we need the following two lemmas. 
The first result is about the absolute continuity of $\eta$, while the second result provides a basic inequality for the transition operator as a linear operator on $L^p$ space. 
As we will see in the next section, the two lemmas allow us to prove that the invariant measure $\eta$ has bounded density with respect to the Lebesgue measure.

The proofs are standard, but we sketch them for completeness.

\begin{lem}
Let ${\mu_n}$ be a sequence of probability measures on $(D,\mathcal{B})$ that converges in total variation norm to a measure $\mu$. Assume further that each ${\mu_n}$ is absolutely continuous with respect to Lebesgue measure. Then $\mu$ is also absolutely continuous w.r.t Lebesgue measure and admits a non-negative density function.
\label{L1}
\end{lem}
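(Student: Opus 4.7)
The plan is to use the standard characterization: a measure is absolutely continuous with respect to Lebesgue measure precisely when it assigns zero mass to every Lebesgue-null set, and then invoke Radon--Nikodym to extract the density. This reduces the statement to showing that Lebesgue-null sets have $\mu$-measure zero, which follows directly from total variation convergence.

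More concretely, I would fix an arbitrary Borel set $A \in \mathcal{B}$ with Lebesgue measure zero. By hypothesis each $\mu_n$ is absolutely continuous with respect to Lebesgue measure, so $\mu_n(A) = 0$ for every $n$. Now total variation convergence $\mu_n \to \mu$ implies in particular that $|\mu_n(A) - \mu(A)| \le \|\mu_n - \mu\|_{TV} \to 0$, since the total variation norm dominates the difference of measures evaluated on any Borel set (indeed, $\|\mu_n - \mu\|_{TV} = \sup_{B \in \mathcal{B}} |\mu_n(B) - \mu(B)|$ up to a factor of two depending on convention). Hence $\mu(A) = 0$, establishing $\mu \ll \lambda$ where $\lambda$ is Lebesgue measure on $D$.

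Once absolute continuity is established, the Radon--Nikodym theorem produces a density function $h = d\mu/d\lambda \in L^1(D)$, and $h \ge 0$ almost everywhere because $\mu$ is a non-negative measure (if $h$ were negative on a set $E$ of positive Lebesgue measure one could truncate to produce a contradiction with $\mu(E) \ge 0$). Thus $\mu$ admits a non-negative density as claimed.

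There is no substantive obstacle here; the only subtlety is to remember that total variation convergence controls $|\mu_n(B) - \mu(B)|$ uniformly over Borel sets $B$, which is exactly what is needed to transfer the null-set property from the sequence to the limit. All other steps are invocations of standard measure-theoretic facts (Radon--Nikodym and the non-negativity of densities of positive measures).
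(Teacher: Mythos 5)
Your proposal is correct and follows essentially the same route as the paper: fix a Lebesgue-null Borel set $A$, note $\mu_n(A)=0$ for all $n$, use total variation convergence to conclude $\mu(A)=\lim_n \mu_n(A)=0$, and then obtain the non-negative density from Radon--Nikodym. The only difference is that you spell out the Radon--Nikodym step and the non-negativity of the density, which the paper leaves implicit.
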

\begin{proof}
Consider any Borel measurable set A with $|A|=0$. By the assumption of absolute continuity, $\mu_n(A)=0$, hence $\mu(A)=\lim{\mu_n(A)}=0$. Since A was arbitrary, $\mu$ is absolutely continuous w.r.t. Lebesgue measure.
\end{proof}

Throughout the rest of this section, for any two linear normed space $U, V$, we will denote by $\mathcal{L}(U, V)$ the space of al linear operator with the source domain and the target domain being $U$ and $V$, respectively, equipped with the standard operator norm. 
We then have the following result. 

\begin{lem}
For $1 \le p \le \infty$, let $K(x,y)$ be a bounded function on $D \times D$, and let
\[
\mathcal{V}g(y)=\int{K(x,y)g(x)dx}
\]
for $g \in L^p(D)$. 
Then

\begin{itemize}

\item[(a) ] $\mathcal{V}$: $L^2(D) \to L^2(D)$ is a compact linear operator. Moreover
\[
\|\mathcal{V}\|_{\mathcal{L}(L^2 (D),L^2(D))} = \|K\|_{L^2(D \times D)}.
\]
\item[(b) ]  $\mathcal{V}$: $L^1(D) \to L^1(D)$ is a bounded linear operator. 
Moreover, if $K(x,y)$ is a transition probability function, then  
\[
\|\mathcal{V}\|_{\mathcal{L}(L^1(D),L^1(D))} \le 1.
\]
\item[(c) ] $\mathcal{V}$ maps $L^1(D)$ to $L^{\infty}(D)$, and
\[
\|\mathcal{V}\|_{\mathcal{L}(L^1(D),L^{\infty}(D))} \le \|K\|_{\infty}.
\]
\item[(d) ] If $g \in L^{2}(D)$ and $2 \le p \le \infty$ then
 \[
 \|\mathcal{V}g\|_p \le \|K\|_{p} \max \{\|g\|_1,\|g\|_2 \}. 
 \]
\end{itemize}
\label{L2}
\end{lem}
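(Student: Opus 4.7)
The plan is to dispatch the four parts in turn, each via a standard integration-theoretic tool (Fubini, Cauchy--Schwarz, H\"older), with the real content concentrated in part (d), where a small interpolation argument for $L^r$-norms of $g$ is required.

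For part (a), I would apply Cauchy--Schwarz in the $x$-variable inside the definition of $\mathcal{V}g(y)$ to obtain the pointwise estimate
\[
|\mathcal{V}g(y)|^2 \le \|g\|_2^2 \int_D |K(x,y)|^2\,dx,
\]
then integrate in $y$ and use Fubini to conclude $\|\mathcal{V}g\|_2 \le \|K\|_{L^2(D\times D)}\|g\|_2$, i.e.\ the Hilbert--Schmidt bound. For compactness I would use the usual density argument: approximate $K$ in $L^2(D\times D)$ by finite sums $\sum_i \phi_i(x)\psi_i(y)$, each of which induces a finite-rank operator on $L^2(D)$, and invoke the bound just established (applied to the difference of kernels) to conclude that $\mathcal{V}$ is the operator-norm limit of finite-rank operators, hence compact.

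Parts (b) and (c) are one-liners. For (b), Fubini together with the transition-probability identity $\int_D K(x,y)\,dy = 1$ for a.e.\ $x$ gives
\[
\|\mathcal{V}g\|_1 \le \int_D |g(x)| \int_D K(x,y)\,dy\,dx = \|g\|_1.
\]
For (c), the pointwise estimate $|\mathcal{V}g(y)| \le \|K\|_\infty \|g\|_1$ is immediate from the definition of $\mathcal{V}$.

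The substantive step is (d). I would apply H\"older's inequality in the $x$-variable with exponents $p$ on $|K(\cdot,y)|$ and $q := p/(p-1)$ on $|g|$, yielding
\[
|\mathcal{V}g(y)|^p \le \|g\|_q^p \int_D |K(x,y)|^p\,dx,
\]
then integrate in $y$ and use Fubini to obtain $\|\mathcal{V}g\|_p \le \|K\|_{L^p(D\times D)}\|g\|_q$ (the endpoint $p=\infty$, where the H\"older step degenerates, is already part (c)). The hypothesis $2 \le p \le \infty$ forces $1 \le q \le 2$, so by log-convexity of $L^r$-norms,
\[
\|g\|_q \le \|g\|_1^{\theta}\|g\|_2^{1-\theta} \le \max\{\|g\|_1,\|g\|_2\}
\]
with $\theta = 2/q - 1 \in [0,1]$, which gives the claim. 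The only mildly delicate bookkeeping is checking that $\theta$ lies in $[0,1]$ and that the two endpoints $p=2$, $p=\infty$ reduce cleanly to (a) and (c) respectively; I do not expect a real obstacle beyond that, since the combination of H\"older plus log-convex interpolation is exactly what is needed to replace the kernel-side exponent $q$ by the $\max$ of the two fixed norms on the right-hand side.
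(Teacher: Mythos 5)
Your proposal is correct, and for parts (b) and (c) it coincides with the paper's argument (pointwise bound by $\|K\|_\infty\|g\|_1$, then integrate over $D$). For part (a) the paper simply cites a textbook result on Hilbert--Schmidt operators, whereas you supply the standard proof (Cauchy--Schwarz plus approximation of $K$ by degenerate kernels to get compactness); note, though, that your argument yields only $\|\mathcal{V}\|_{\mathcal{L}(L^2,L^2)} \le \|K\|_{L^2(D\times D)}$, not the equality asserted in the lemma --- and in fact the equality is false in general (the operator norm of a Hilbert--Schmidt operator is typically strictly smaller than its Hilbert--Schmidt norm, e.g.\ for a kernel inducing a rank-two orthogonal projection), so the inequality you prove is the correct statement and is all that is used downstream. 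The genuine divergence is in part (d): the paper fixes $g$, views $W\phi = \int_D \phi(x,y)g(x)\,dx$ as a linear operator in the kernel variable $\phi$, establishes the endpoint bounds $L^2(D\times D)\to L^2(D)$ and $L^\infty(D\times D)\to L^\infty(D)$ with constants $\|g\|_2$ and $\|g\|_1$, and invokes Riesz--Thorin to interpolate; you instead apply H\"older directly in $x$ with exponents $(p,q)$, $q=p/(p-1)$, to get $\|\mathcal{V}g\|_p \le \|K\|_{L^p(D\times D)}\|g\|_q$, and then use log-convexity of $r\mapsto\|g\|_r$ on $q\in[1,2]$ to dominate $\|g\|_q$ by $\max\{\|g\|_1,\|g\|_2\}$. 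Both arguments are valid and your exponent bookkeeping ($\theta = 2/q-1\in[0,1]$, the endpoints $p=2,\infty$ reducing to (a) and (c)) checks out; your route is more elementary, avoids Riesz--Thorin entirely, and in fact gives the slightly sharper intermediate bound $\|K\|_p\|g\|_{p/(p-1)}$, while the paper's interpolation-in-the-kernel trick is the more systematic device if one wanted bounds for $1\le p\le 2$ as well.
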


Before proceeding to provide the proof, we note that Lemma $\ref{L2}$ plays a central role in the rest of the paper. To be more precise, parts (b) and (c) of the Lemma helps establish that the invariant measure $\eta$ has bounded density, while parts (a) and (d) helps provide the $L^p$ estimates of the sensitivity of the invariant measures of Markov chains under kernel perturbation (Section 4).

\begin{proof}
Part a) of the Lemma is a well-known result about Hilbert-Schmidt integral operators. For reference, cf. \cite{P}.

For b) and c), let $M = \sup_{D \times D}{|K(x,y)|}=\|K\|_{\infty}$.  Then for all $y \in D$ we have
\[
|\mathcal{V}g(y)|=\left|\int{K(x,y)g(x)dx} \right|
\le M\int{|g(x)|dx}.
\]
In other words, $\|\mathcal{V}g\|_{\infty} \le \|K\|_{\infty}\|g\|_1$.

Integrating over $D$ gives
\[
\|\mathcal{V}g\|_1 \le \text{Vol}(D)\|\mathcal{V}g\|_{\infty} \le \text{Vol}(D)\|K\|_{\infty}\|g\|_1,
\]
which proves b) and c).  

Finally, if $K$ is a transition probability, we have
\[
\int{|\mathcal{V}g(y)|dy}=\int{\left|\int{K(x,y)g(x)dx} \right|dy}
\le\int{\int{K(x,y)dy}|g(x)|dx}
=\int{|g(x)|dx}
\]
which implies $\|\mathcal{V}\|_{\mathcal{L}(L^1 (D),L^1(D))}  \le 1$.

For d), consider the linear operator $W$ defined on $L^2(D \times D)$ and on $L^{\infty}(D \times D)$ as
\[
W\phi=\int_{D}{\phi(x,y)g(x)dx}
\]
Then from part a) and c), we have $W$ is a bounded linear operator that maps $L^2(D \times D)$ to $L^2(D)$, and maps $L^{\infty}(D \times D)$ to $L^{\infty}(D)$. Moreover, the following inequalities are satisfied:
\[
\|W\phi\|_{L^2(D)} \le \|g\|_{L^2(D)} \|\phi\|_{L^2(D \times D)}
\]
\[
\|W(\phi)\|_{L^{\infty}(D)} \le \|g\|_{L^1(D)} \|\phi\|_{L^{\infty}(D \times D)}
\]
Using Riesz-Thorin interpolation theorem (see \cite{Riesz}), we deduce that $W$ also maps $L^p(D \times D)$ to $L^p(D)$, and
\[
\|W\phi\|_{L^p(D)} \le \max\{\|g\|_{L^2(D)},\|g\|_{L^1(D)} \} \|\phi\|_{L^p(D \times D)}.
\]
Replace $\phi$ by $K$, noticing that $\mathcal{V}g=W(K)$, we deduce
 \[ 
 \|Lg\|_p \le \|K\|_{p} \max \{\|g\|_1,\|g\|_2 \}.
 \]
\end{proof}

\subsection{Regularity}

These two previous lemmas allow us to prove the following result.

\begin{thm}
(Regularity of invariant measure) The invariant measure $\eta$ of $S$ is absolutely continuous w.r.t Lebesgue measure on D. Moreover, there exists $\hat \eta \in L^{\infty}(D)$ so that for each $C \in {\mathcal B}$, 
\[\eta(C)=\int_C \hat \eta(x)dx.\]
Also, $\hat \eta$ is invariant under $S$:  $S\hat \eta=\hat \eta$.
\label{Reg}
\end{thm}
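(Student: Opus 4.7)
The plan is to combine Theorem~\ref{T1}, Lemma~\ref{L1}, and Lemma~\ref{L2}(c) in three short steps: first establish absolute continuity of $\eta$ via the limit $Q^n(x,\cdot)\to\eta$; second, read off a density $\hat\eta\in L^1(D)$ and use the invariance $S\eta=\eta$ to deduce that $\hat\eta$ is a fixed point of the $L^p$-extension of $S$; third, feed this identity into Lemma~\ref{L2}(c) to obtain the $L^\infty$ bound for free.

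First, I would observe that for every $x\in D$ and every $n\ge 1$, the measure $Q^n(x,\cdot)$ is absolutely continuous with respect to Lebesgue measure: $Q^1(x,\cdot)$ has density $y\mapsto Q(x,y)$ by construction in \eqref{22}, and the recursion $Q^n(x,y)=\int_D Q(z,y)Q^{n-1}(x,z)\,dz$ exhibits $Q^n(x,\cdot)$ as the measure with density $y\mapsto Q^n(x,y)$. Theorem~\ref{T1} then gives $Q^n(x,\cdot)\to\eta$ in total variation norm, so Lemma~\ref{L1} yields that $\eta$ is absolutely continuous with respect to Lebesgue measure. Let $\hat\eta\in L^1(D)$ denote the Radon--Nikodym derivative, noting $\hat\eta\ge 0$ and $\|\hat\eta\|_1=1$.

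Second, I would translate the measure-level invariance $S\eta=\eta$ into the $L^p$-level identity $S\hat\eta=\hat\eta$. Using Fubini, for any $C\in\mathcal{B}$,
\[
S\eta(C)=\int_D Q(x,C)\,\eta(dx)=\int_D\!\!\int_C Q(x,y)\,dy\,\hat\eta(x)\,dx=\int_C\!\left(\int_D Q(x,y)\hat\eta(x)\,dx\right)dy.
\]
Thus $S\eta$ is absolutely continuous with density $y\mapsto\int_D Q(x,y)\hat\eta(x)\,dx$, which is exactly the $L^p$-extension $S\hat\eta(y)$ evaluated at $y$. The uniqueness of the Radon--Nikodym derivative combined with $S\eta=\eta$ then forces $S\hat\eta=\hat\eta$ in $L^1(D)$.

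Third, condition \eqref{Con1} gives $0\le Q(x,y)=\prod_{i=1}^d Q_i(\cdot)\le M^d$ on $D\times D$, so $\|Q\|_{\infty}\le M^d<\infty$. Applying Lemma~\ref{L2}(c) to the kernel $Q$ and the function $\hat\eta\in L^1(D)$ yields
\[
\|\hat\eta\|_\infty=\|S\hat\eta\|_\infty\le \|Q\|_\infty\,\|\hat\eta\|_1\le M^d,
\]
so $\hat\eta\in L^\infty(D)$ as claimed. The anticipated obstacle is purely a bookkeeping one: making the passage from the invariance of $\eta$ as a measure to the fixed-point equation $S\hat\eta=\hat\eta$ in the $L^p$ sense requires the Fubini computation above together with uniqueness of densities; once that identification is in place, the $L^\infty$ bound drops out of Lemma~\ref{L2}(c) with essentially no work.
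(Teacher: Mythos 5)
Your proposal is correct and follows essentially the same route as the paper: Theorem~\ref{T1} plus Lemma~\ref{L1} for absolute continuity, the Fubini computation to turn $S\eta=\eta$ into $S\hat\eta=\hat\eta$, and Lemma~\ref{L2}(c) for the $L^\infty$ bound. You are in fact slightly more careful than the paper in explicitly checking that each $Q^n(x,\cdot)$ is absolutely continuous (a hypothesis of Lemma~\ref{L1}) and in recording the explicit bound $\|\hat\eta\|_\infty\le M^d$.
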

\begin{proof}
The proof of this theorem is straightforward from the previous theorems and lemma. From Theorem $\ref{T1}$ and Lemma $\ref{L1}$, we know that $\eta$ is absolutely continuous and admits a density function:
\[\eta(dx)=\hat \eta(x)dx\]
with $\hat \eta \in L^1(D)$.

Now considering $S$ as a bounded linear operator on $L^1(D)$, we have
\begin{align*}
\int_{A}{\hat \eta(x)dx}&=\eta(A)=S\eta(A)=\int_D{Q(x,A)\eta(dx)} \\
&=\int_D{\int_A{Q(x,y)dy}~\hat \eta(x)dx}=\int_A{\left(\int_D{Q(x,y)\hat \eta(x)dx}\right)~dy}.
\end{align*}
Since $A$ was arbitrary, we deduce that
\[
\hat \eta(x)=\int_D{Q(x,y)\hat \eta(x)dx}
\]
or $\hat \eta=S\hat \eta$.
From Lemma \ref{L2}, $S$ maps $L^1(D)$ to $L^{\infty}(D)$. Hence $\hat \eta=S\hat \eta \in L^{\infty}(D)$, so $\hat \eta$ is a bounded function.

\end{proof}

\begin{rem}
Since D is a subset with bounded measure of $R^d$, $\hat \eta$ also belongs to $L^p(D)$, for all $1 \le p \le \infty$.
\end{rem}

\section{Sensitivity and convergence of non-uniformly ergodic Markov chains}

Before proceeding to give result about the sensitivity of the invariant measures under perturbation, we want to make a remark that the assumption of uniformly boundedness away from 0 of the approximations $Q_i$ was introduced only to guarantee the existence and uniqueness of an absolutely continuous invariant measure $\eta$. 

As we mentioned before, we can always use additional cutoff functions on the approximation scheme to guarantee the boundedness from below of $Q_i$, without significantly affecting the accuracy of the approximation scheme. However, as an analysis of convergence of perturbed Monte Carlo Markov chains, condition( $\ref{Con1}$) is replaced by any condition that guarantees the existence and uniqueness of the invariant measure $\eta$ and the ergodicity of the Markov chain $\{Y_n\}$. In a similar manner, the assumptions of Theorem 2.1 can be replaced by the existence and uniqueness of the invariant measure $\pi$ and the ergodicity of the Markov chain $\{X_n\}$.

In short, we will assume the following conditions in the subsequent analyses

\begin{enumerate}
  \item the invariant measures $\pi$, $\eta$ of the Markov chain exists and are unique.
	\item the Markov chains $\{X_n\}$, $\{Y_n\}$ are ergodic (not necessarily uniformly ergodic).
	\item the distributions $\pi$, $\eta$ have finite second moments.	

\end{enumerate}

The distinguishing feature between our approach and other work \cite{R1,R2,R3} on convergence of perturbed Markov chain is that by considering the invariant measures as fixed points of linear operators on function spaces, we don't need to impose any further conditions on the rate of convergence of the Markov chain. For that reason, the results we derived in this paper can address the case when the considered Monte Carlo Markov chains are not uniformly ergodic.

\subsection{Continuity of eigenspaces for eigenvalue 1}
We recall from the previous part of the paper that the two transition operators $T$ and $S$ admit unique absolutely continuous invariant measures $\pi$ and $\eta$, respectively. Before proceeding to derive estimates of the distance between $\hat \pi$ and $\hat \eta$, we provide in this section two key lemmas to further investigate properties of the transition operators $T$ and $S$ as operators on $L^2(D)$. 

In Lemma $\ref{M1}$, we will prove that the eigenspaces correspond to eigenvalue $\lambda=1$ of $T$ and $S$ are one-dimensional subspaces spanned by $\pi$ and $\eta$, respectively. Lemma $\ref{M2}$ investigates a special case when it is possible to estimate the distance between the positive invariant eigenvectors of two close operators. 

\begin{lem} 
Using the same notation as in Section 2.3 and consider $T,S$ as operators on $H=L^2(D)$, we have 	
\begin{itemize}
	\item $\{v \in H: Tv=v\}=\langle \hat \pi \rangle$	
	\item $\{v \in H: Sv=v\}=\langle \hat \eta\rangle$,
\end{itemize}
where $\langle \hat \pi\rangle$ denotes the span of $\hat \pi$.
\label{M1}
\end{lem}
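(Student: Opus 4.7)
The plan is to reduce the eigenvalue-one statement for each of $T$ and $S$ to the uniqueness of an invariant probability density already obtained in Theorems~\ref{TheoGibb} and~\ref{T1}. I will sketch the argument for $T$; the argument for $S$ is identical with the obvious change of references.

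First I would exploit the Markov structure. For any $v \in L^2(D)$, the triangle inequality inside the integral defining $T$ gives $|Tv(y)| \le T|v|(y)$ pointwise almost everywhere, and Fubini combined with $\int_D P(x,y)\,dy = 1$ gives $\int_D T|v|\,dy = \int_D |v|\,dx$. If $Tv = v$, then integrating $|v| = |Tv| \le T|v|$ yields equality of integrals, so the non-negative function $T|v| - |v|$ vanishes almost everywhere. Hence $T|v| = |v|$ a.e.

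Next I would decompose $v = v^+ - v^-$ with disjoint supports, so $|v| = v^+ + v^-$. Adding and subtracting the two identities $Tv = v$ and $T|v| = |v|$, and using linearity of $T$, gives $Tv^+ = v^+$ and $Tv^- = v^-$. Because $D$ has finite Lebesgue measure, both $v^\pm$ lie in $L^1(D)$. If $v^+ \not\equiv 0$ then $v^+/\|v^+\|_1$ is a $T$-invariant probability density, which by the uniqueness part of Theorem~\ref{TheoGibb} must equal $\hat\pi$. Thus $v^+ = a\hat\pi$ and similarly $v^- = b\hat\pi$ for some $a,b \ge 0$.

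The final step is to observe that $v^+$ and $v^-$ have disjoint supports by construction, whereas for any $c > 0$ the support of $c\hat\pi$ equals $\{\hat\pi > 0\}$, which has positive Lebesgue measure since $\int_D \hat\pi = 1$. Therefore at most one of $a$, $b$ can be nonzero, so $v$ is a scalar multiple of $\hat\pi$, proving $\{v \in L^2(D) : Tv = v\} = \langle \hat\pi \rangle$. The place I expect the most care to be needed is the passage from $T|v| = |v|$ to the separate identities $Tv^\pm = v^\pm$: this works cleanly only because $T$ is a positive integral operator and because $L^2(D) \subset L^1(D)$ by finiteness of the measure of $D$. Everything else is a routine consequence of the uniqueness of the invariant probability established earlier in the paper.
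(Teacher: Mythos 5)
Your proof is correct, and it opens exactly as the paper's does: both compute $\int |Tv| \le \int T|v| = \int |v|$ from $\int_D P(x,y)\,dy = 1$ and extract information from the case of equality. Where you diverge is in how that information is used. The paper argues directly that equality in the triangle inequality, for a.e.\ $y$, together with the strict positivity $P(x,y)>0$, forces $w$ to be a.e.\ sign-definite, after which $w/\|w\|_{L^1}$ is an invariant probability density and uniqueness finishes the job. You instead upgrade the integral identity to the pointwise identity $T|v|=|v|$, deduce by linearity that $v^+$ and $v^-$ are \emph{separately} invariant, identify each nonzero one with $\hat\pi$ by uniqueness, and rule out both being nonzero via the disjointness of their supports. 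The two finishes buy slightly different things: the paper's is shorter but leans on $P(x,y)>0$ everywhere (a hypothesis it invokes without comment, and one that is essential to its ``$w$ does not change sign'' step); yours needs only $P\ge 0$, mass conservation, and uniqueness of the invariant probability measure, so it is marginally more general and makes the sign-definiteness step fully explicit rather than asserted. Both reduce, in the end, to the same uniqueness statement for the invariant measure, so the structural content is the same; your version is a clean and slightly more robust way to close the argument.
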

\begin{proof}
Consider any $w \in H- \{0\}$ such that $Tw=w$.  Then
\[
\int{|Tw(y)|dy}=\int{\left|\int{P(x,y)w(x)dx} \right|dy}
\le\int{\int{P(x,y)dy}|w(x)|dx}
=\int{|w(x)|dx}.
\]
Equality happens only when
\[
\left|\int{P(x,y)w(x)dx} \right|=\int{|P(x,y)w(x)|dx}
\]
for a.e. $y \in D$.

Since $P(x,y)>0$, this happens only if $w$ does not change sign on D. Therefore, if we define 
\[
w^*=\frac{w}{\|w\|_{L^1(D)}}
\]
then $w^*$ is the density function of a probability measure on D. Moreover, we also have $Tw^*=w^*$. Since $\pi$ is the unique invariant measure that is also a fixed point of $T$, we deduce that $w^*=\hat \pi$. Hence, $w \in \langle \hat \pi \rangle $.
\end{proof}

\begin{lem}
Let $M$ and ${N}$ be Hilbert-Schmidt integral operators on $H=L^2(D)$. Assume further that $u, v \in H$ such that
\begin{itemize}
	\item[(i) ] $\|u\|_{H}=\|v\|_{H}=1$
	\item[(ii) ] $\{w \in H: Mw=w\}=\langle u \rangle$
	\item[(iii) ] $\{w \in H: Nw=w\}=\langle v \rangle$
	\item[(iv) ] $u$, $v$ are positive functions.
\end{itemize}
Then there exists $\alpha >0$ that depends only on $M$ such that
\[
\|v-u\|_{H} \le C(\alpha)\|M-N\|_{L(H,H)}.
\]
\label{M2}
\end{lem}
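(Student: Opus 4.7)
The plan is to treat the two fixed-point equations $Mu=u$ and $Nv=v$ as a single perturbation identity and invert $I-M$ on a suitable complement to its kernel. Since $M$ is Hilbert--Schmidt (hence compact) and $\ker(I-M)=\langle u\rangle$ is one-dimensional by hypothesis (ii), the eigenvalue $1$ of $M$ is an isolated point of the spectrum, and the Riesz spectral projector $E$ at $\lambda=1$ is a bounded rank-one operator. This yields a (not necessarily orthogonal) direct-sum decomposition $H=H_1\oplus H_2$ with $H_1=\mathrm{range}(E)=\langle u\rangle$ and $H_2=\ker(E)$. Both summands are $M$-invariant, and the restriction $(I-M)|_{H_2}$ is a bijection with bounded inverse by Fredholm theory. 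I set $\alpha:=\|((I-M)|_{H_2})^{-1}\|$, which depends only on $M$.

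First I would subtract the two fixed-point equations. Since $Mu=u$ and $Nv=v$, one obtains $(I-M)(v-u)=(N-M)v$. Writing $v=cu+v_2$ with $v_2\in H_2$ and using $(I-M)u=0$, this collapses to $(I-M)v_2=(N-M)v$. Because $H_2$ is $M$-invariant the left-hand side lies in $H_2$, so inversion on $H_2$ gives $v_2=((I-M)|_{H_2})^{-1}(N-M)v$ and hence $\|v_2\|\le\alpha\,\|N-M\|_{\mathcal{L}(H,H)}\,\|v\|=\alpha\,\|N-M\|_{\mathcal{L}(H,H)}$.

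Next I would control the scalar $c$. The normalization $\|v\|^2=1$ gives the quadratic relation $1-c^2=2c\langle u,v_2\rangle+\|v_2\|^2$. Positivity of $u$ and $v$ (hypothesis (iv)) forces $\langle u,v\rangle\ge 0$, and since $\langle u,v\rangle=c+\langle u,v_2\rangle$ this yields $c\ge -\|v_2\|$. For $\|v_2\|\le 1/2$ we then have $|1+c|\ge 1/2$, and dividing the quadratic relation by $|1+c|$ produces $|1-c|\le 2(2|c|\|v_2\|+\|v_2\|^2)=O(\|v_2\|)$. Substituting into $\|v-u\|^2=(c-1)^2+2(c-1)\langle u,v_2\rangle+\|v_2\|^2$ gives $\|v-u\|^2=O(\|v_2\|^2)$, and taking a square root yields the desired linear bound $\|v-u\|\le C(\alpha)\|N-M\|_{\mathcal{L}(H,H)}$. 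For the complementary regime $\|v_2\|>1/2$ (i.e., $\|N-M\|$ not small) the trivial bound $\|v-u\|\le\|u\|+\|v\|=2$ can be absorbed into the constant $C(\alpha)$.

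The main obstacle is that $M$ is not assumed self-adjoint, so the Riesz projector $E$ is oblique rather than orthogonal and the subspaces $H_1,H_2$ are not perpendicular. As a consequence one cannot directly apply the Pythagorean identity to the decomposition $v=cu+v_2$ to read off $c\approx 1$; the quadratic relation above admits \emph{two} roots near $\pm 1$ as $\|v_2\|\to 0$, and only positivity (hypothesis (iv)) rules out the spurious root $c\approx -1$. The interplay between the spectral decomposition of $M$ and the positivity of the fixed-point densities is therefore the crux of the argument.
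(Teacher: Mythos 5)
Your overall strategy (split $v$ into a component along $u$ plus a remainder on which $I-M$ is invertible, bound the remainder by $\|M-N\|$, then use positivity to pin the scalar near $+1$ rather than $-1$) is the right one and matches the paper's in spirit. But there is one genuine gap: you take $H_1=\mathrm{range}(E)$ for the Riesz projector $E$ at $\lambda=1$ and assert $H_1=\langle u\rangle$. The range of the Riesz projector is the \emph{algebraic} (generalized) eigenspace $\ker\bigl((I-M)^m\bigr)$, whereas hypothesis (ii) only says the \emph{geometric} eigenspace $\ker(I-M)$ is $\langle u\rangle$. Nothing in the hypotheses rules out a Jordan block at $1$ (the finite-dimensional model $M=\bigl(\begin{smallmatrix}1&1\\0&1\end{smallmatrix}\bigr)$ has $\dim\ker(I-M)=1$ but a two-dimensional Riesz range), in which case the decomposition $v=cu+v_2$ with $v_2\in\ker E$ does not exist and your inversion step collapses. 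The argument can be repaired — either by additionally handling the nilpotent part of $(I-M)|_{H_1}$, or by invoking power-boundedness of the transition operators to force semisimplicity of the peripheral eigenvalue — but neither is in the statement's hypotheses, so as written the step fails.

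The paper avoids this entirely and more cheaply: it takes $K=u^{\perp}$, the \emph{orthogonal} complement, which need not be $M$-invariant, and proves directly by compactness that $\|(M-I)k\|\ge\alpha\|k\|$ for $k\in K$ (if $\|k_n\|=1$, $(M-I)k_n\to 0$, compactness gives $k_n\to k_\infty\in K$ with $Mk_\infty=k_\infty$, forcing $k_\infty\in\langle u\rangle\cap u^{\perp}=\{0\}$, a contradiction). Writing $v=\lambda u+k$ orthogonally, the identity $(M-N)v=(M-I)v=(M-I)k$ then gives $\|k\|\le\|M-N\|/\alpha$ with no invariance of $K$ needed, and orthogonality makes the scalar bookkeeping trivial: $\lambda^2=1-\|k\|^2$ and $\|v-u\|^2=2(1-\lambda)\le 2(1-\lambda^2)=2\|k\|^2$, with positivity supplying $\lambda=\langle u,v\rangle\ge 0$ exactly as in your argument. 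You correctly identified positivity as the crux for excluding $c\approx -1$; the fix is to abandon the spectral projector in favor of the orthogonal complement and the bounded-below estimate, which uses only the geometric kernel.
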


\begin{proof}
Since $H$ is a Hilbert space, we can write
\[
H=\langle u \rangle \oplus K
\]
where K is the orthogonal complement of the linear space spanned by $u$. For the sake of convenience, in the rest of the proof, we will denote $\|\cdot\|_H$ simply by $\|\cdot\|$.

First we show that there exists $\alpha>0$ such that 
$$\|(M-I)k\| \ge \alpha \|k\| ~ \forall k \in K.$$ 
By way of contradiction, suppose that $\exists \alpha_n \to 0, \|k_n\|=1, k_n \in K$ such that $\|Mk_n-k_n\|=\alpha_n$. Since $M$ is a compact operator on $H$, by extracting a subsequence, we can assume that $Mk_n \to k_{\infty} \in H$. On the other hand, we have $\|Mk_n-k_n\|=\alpha_n \to 0$. By the triangle inequality, we have
\[
\|k_n-k_{\infty}\| \le \|k_n-Mk_n\| +\|Mk_n-k_{\infty}\| \to 0.
\]

We deduce that $k_n \to k_{\infty}$, and hence that $\|k_\infty\|=1$. Since K is closed we have $k_{\infty} \in K$, and since $M$ is continuous we have $Mk_{\infty}=k_{\infty}$. By (ii), $Mu=u$ has no nontrivial solution in K, so we deduce that $k_{\infty}=0$, which contradicts $\|k_\infty\|=1$.

On the other hand, we can uniquely decompose
\begin{equation} \label{vuk}
v=\lambda u + k
\end{equation}
for some $\lambda \in \RR, k\in K$.  Since $u$ and $v$ are fixed points of $M$ and $N$, respectively, we deduce that
\[
Mv=M(\lambda u + k)=\lambda Mu + Mk=\lambda u + Mk
\]
and
\[
Nv=v=\lambda u + k.
\]
Therefore
\[
\|M-N\|_{L(H,H)}\ge\|Mv-Nv\| = \|(\lambda u + Mk)-(\lambda u + k)\|=\|Mk-k\| \ge \alpha \|k\|.
\]
The orthogonal decomposition in \eqref{vuk} gives
\[
1=\|v\|^2=\lambda^2\|u\|^2 + \|k\|^2 = \lambda^2 + \|k\|^2,
\]
so
\[
\lambda^2=1-\|k\|^2 \ge 1-\left(\frac{\|M-N\|_{L(H,H)}}{\alpha} \right)^2.
\]
This plus the same decomposition also gives
\begin{align*}
\|v-u\|^2 &= (\lambda-1)^2 \|u\|^2+ \|k\|^2 = \lambda^2 -2\lambda +1 +\|k\|^2\\
 &=2(1-\lambda)= 2 \frac{1-\lambda^2}{1+\lambda}.
\end{align*}
On the other hand, from the facts that $u$, $v$ are positive functions (by (iv)) with $\|u\|=1$ (by (i)) and the orthogonal decomposition of $v$, we have $\lambda=\langle u,v\rangle = \int_{D}{uv ~ dx} \ge 0$. Hence
\[
\|v-u\|^2 \le 2(1-\lambda^2) = 2\|k\|^2 \le 2\frac{\|M-N\|_{L(H,H)}^2}{\alpha^2}
\]
or
\[
\|v-u\| \le \sqrt{2}\frac{\|M-N\|_{L(H,H)}}{\alpha}.
\]
\end{proof}

\subsection{Convergence results}

In this section, we answer the question about the sensitivity of the invariant measure of a Monte Carlo Markov chain under kernel perturbations: given that $\|P-Q\|<\epsilon$ (or equivalently, given a small perturbation on the transition operator), can we estimate the distance $\|\pi-\eta\|$ between the two invariant measures? 

The outline of this section is as follows. Using Lemma $\ref{M1}$ and $\ref{M2}$, we derive the $L^2$-estimate of the distance between $\hat \eta$ and $\hat \pi$. 

Then, knowing that S maps $L^1(D)$ to $L^{\infty}(D)$, we bound the $L^{\infty}$-norm by $L^2$-norm to produce an $L^{\infty}$-estimate, and then apply Lemma~\ref{L2} to derive the $L^p$ estimate for $2 \le p\le \infty$.

Since the proofs require us to switch back and forth between norms, let us recall that if $f \in L^{\infty}(D)$ then
\[
\|f\|_1 \le C\|f\|_2 \text{ and } \|f\|_2 \le C\|f\|_{\infty}
\]
where $C=\sqrt{\text{Vol}(D)}$.

\begin{thm}
($L^{2}$-estimate)

There exists $\delta(\pi), C(\pi)>0$ such that for $\left\|P-Q\right\|_2<\delta(\pi)$, we have
\[
\|\hat \pi-\hat \eta\|_2 \le C(\pi)\left\|P-Q\right\|_2 
\]
\label{V1}
\end{thm}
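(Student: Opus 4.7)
The strategy is to apply Lemma~\ref{M2} to the operators $T$ and $S$ acting on $H = L^2(D)$, with the $L^2$-normalized densities $u = \hat\pi / \|\hat\pi\|_2$ and $v = \hat\eta / \|\hat\eta\|_2$, and then translate the resulting bound on $\|u - v\|_2$ into one on $\|\hat\pi - \hat\eta\|_2$.

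First I would verify the hypotheses of Lemma~\ref{M2}. Since $P$ and $Q$ are bounded kernels on the bounded domain $D \times D$, both lie in $L^2(D \times D)$, so $T$ and $S$ are Hilbert--Schmidt. The fixed-point eigenspaces for eigenvalue $1$ are spanned by $\hat\pi$ and $\hat\eta$ by Lemma~\ref{M1}. The function $\hat\pi$ is nonnegative by the Gibbs assumptions; $\hat\eta$ is nonnegative because $\hat\eta = S\hat\eta$ has strictly positive kernel $Q > 0$ and is not identically zero (it integrates to $1$). Lemma~\ref{L2}(a) identifies the operator norm
\[
\|T - S\|_{\mathcal{L}(H, H)} = \|P - Q\|_{L^2(D \times D)} = \|P - Q\|_2,
\]
so Lemma~\ref{M2} yields $\|u - v\|_2 \le C(\alpha) \|P - Q\|_2$, where $\alpha = \alpha(T)$ depends only on $\pi$.

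The last step is to convert this into an estimate on $\hat\pi - \hat\eta$. Using the decomposition
\[
\hat\pi - \hat\eta = \|\hat\pi\|_2\,(u - v) + \bigl(\|\hat\pi\|_2 - \|\hat\eta\|_2\bigr)\, v,
\]
it suffices to control the normalization gap. Since $\int_D u\, dx = 1/\|\hat\pi\|_2$ and $\int_D v\, dx = 1/\|\hat\eta\|_2$, Cauchy--Schwarz gives
\[
\left| \frac{1}{\|\hat\pi\|_2} - \frac{1}{\|\hat\eta\|_2} \right| \le \sqrt{\text{Vol}(D)}\, \|u - v\|_2.
\]
Choosing $\delta(\pi)$ small enough that $C(\alpha)\sqrt{\text{Vol}(D)}\, \|P-Q\|_2 \le 1/(2\|\hat\pi\|_2)$ whenever $\|P - Q\|_2 < \delta(\pi)$ then forces $\|\hat\eta\|_2 \le 2\|\hat\pi\|_2$, which bounds $|\|\hat\pi\|_2 - \|\hat\eta\|_2|$ by $2\|\hat\pi\|_2^2 \sqrt{\text{Vol}(D)}\, \|u - v\|_2$. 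Combining the two contributions in the decomposition produces an estimate of the desired form $\|\hat\pi - \hat\eta\|_2 \le C(\pi) \|P - Q\|_2$.

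The main obstacle is this normalization bookkeeping: Lemma~\ref{M2} naturally controls unit-$L^2$ vectors, whereas the target bound is stated for densities with unit $L^1$ norm, and the two scalings differ by a factor that itself depends on $\hat\eta$. Bridging this gap is precisely what forces the smallness hypothesis $\|P - Q\|_2 < \delta(\pi)$ and makes $C(\pi)$ depend on $\|\hat\pi\|_2$ in addition to the (non-explicit) constant $\alpha$ coming from the compactness-and-contradiction argument in Lemma~\ref{M2}.
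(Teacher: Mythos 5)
Your proposal is correct and follows essentially the same route as the paper's own proof: Lemma~\ref{M2} applied to $u=\hat\pi/\|\hat\pi\|_2$, $v=\hat\eta/\|\hat\eta\|_2$, the identification $\|T-S\|_{\mathcal{L}(L^2,L^2)}=\|P-Q\|_{L^2(D\times D)}$ from Lemma~\ref{L2}(a), and then the same normalization bookkeeping (the paper passes through $\|u-v\|_1\le\sqrt{\mathrm{Vol}(D)}\,\|u-v\|_2$ and the fact that $\|\hat\pi\|_1=\|\hat\eta\|_1=1$, which is your Cauchy--Schwarz step, and uses the same decomposition $\hat\pi-\hat\eta=\|\hat\pi\|_2(u-v)+(\|\hat\pi\|_2-\|\hat\eta\|_2)v$ together with the bound $\|\hat\eta\|_2\le 2\|\hat\pi\|_2$ enforced by the choice of $\delta(\pi)$). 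No substantive differences.
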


\begin{proof}

For clarity, we replace $\hat \pi$ and $\hat \eta$ with $\pi$ and $\eta$ respectively. Applying Lemma $\ref{M2}$ with $u=\frac{\pi}{\|\pi\|_2}$, $v=\frac{\eta}{\|\eta\|_2}$, we have
\begin{equation}
\left\|\frac{\pi}{\|\pi\|_2}-\frac{\eta}{\|\eta\|_2}\right\|_{2} \le \sqrt{2}\frac{\|T-S\|}{\alpha}.
\label{B1}
\end{equation}
Then
\[
\left\|\frac{\pi}{\|\pi\|_2}-\frac{\eta}{\|\eta\|_2}\right\|_{1} \le C\left\|\frac{\pi}{\|\pi\|}-\frac{\eta}{\|\eta\|}\right\|_{2}\le C \sqrt{2}\frac{\|T-S\|}{\alpha} 
\]
with $C=\sqrt{\text{Vol}(D)}.$
By the triangle inequality
\[
\left|~ \left\|\frac{\pi}{\|\pi\|_2}\right\|_{1} -\left\|\frac{\eta}{\|\eta\|_2}\right\|_{1}\right| \le C \sqrt{2}\frac{\|T-S\|}{\alpha} .
\]
Since $\pi$ and $\eta$ are probability measures, we have $\left\|\pi\right\|_1=\left\|\eta\right\|_1=1$, and hence
\[
\left|~ \frac{1}{\|\pi\|_2} -\frac{1}{\|\eta\|_2}\right| \le C\sqrt{2} \frac{\|T-S\|}{\alpha}.
\]
This leads to
		\begin{equation}
		1 -\frac{\|\pi\|_2}{\|\eta\|_2} \le C \sqrt{2}\frac{\|T-S\|}{\alpha} \|\pi\|_2.
		\label{B2}
		\end{equation}
If we assume further that the right hand side is less than 1, then 
		\begin{equation}
		\left\|\eta\right\|_2 < \frac{\|\pi\|_2}{1-C \sqrt{2}\frac{\|T-S\|}{\alpha} \|\pi\|_2}.
		\label{B3}
		\end{equation}
The triangle inequality plus \eqref{B1} and \eqref{B2} give
\begin{align*}
\|\pi-\eta\|_2 &\le \left\|~ \pi -\frac{\|\pi\|_2 \eta}{\|\eta\|_2}\right\|_2 + \left\|~ \eta -\frac{\|\pi\|_2 \eta}{\|\eta\|_2}\right\|_2 \\
&\le \sqrt{2}\frac{\|T-S\|}{\alpha} \|\pi\|_2 + C\sqrt{2}\frac{\|T-S\|}{\alpha}\|\pi\|_2 \|\eta\|_2
\end{align*}
and then \eqref{B3} gives
\begin{equation} \label{piDiffEta}
\|\pi-\eta\|_2 \le \sqrt{2}\frac{\|T-S\|}{\alpha} \|\pi\|_2 \left( 1 + C \frac{\|\pi\|_2}{1-C\sqrt{2} \frac{\|T-S\|}{\alpha}\|\pi\|_2 }\right).
\end{equation}
Since $T$ is defined by $\pi$, we can consider $\alpha$ as a function of $\pi$ only. Moreover, with $\delta(\pi)= \alpha/(2C\sqrt{2}\left\|\pi\right\|_2)$ and $\|T-S\| \leq \delta(\pi)$, the right hand side of \eqref{B2} is at most $1/2$, and the constant in parentheses in \eqref{piDiffEta} is at most $1+2C\|\pi\|_2$.  Hence we define 
\[
C(\pi)= \frac{\sqrt{2}\left\|\pi\right\|_2\left(1+2C\left\|\pi\right\|_2\right)}{\alpha}
\]
and note that from Lemma $\ref{L2}$,
 \[
 \|T-S\|_{\mathcal{L}(L^2,L^2)} = \|P-Q\|_{L^2(D \times D)}.
 \]
Hence for $\left\|P-Q\right\|_2 <\delta(\pi)$, changing back to the original notations, we have the desired estimate
\[\|\hat \pi-\hat \eta\|_2 \le C(\pi)\left\|P-Q\right\|_2. \]

\end{proof}

\begin{rem}
From \eqref{B3} and the choice of $\delta(\pi)$, we see that if $\left\|P-Q\right\|_2 <\delta(\pi)$, then $\|\hat \eta\|_2<2\|\hat \pi\|_2$.
\label{Repieta}
\end{rem}

\begin{thm}
($L^{\infty}$-estimate)

There exists $\delta'(\pi), C'(\pi)>0$ such that if $P,Q \in L^{\infty}(D \times D)$ and  $\left\|P-Q\right\|_{\infty}<\delta'(\pi)$ then
\[
\|\hat \pi-\hat \eta\|_{\infty} \le C'(\pi)\left\|P-Q\right\|_{\infty}.
\]
\label{V2}
\end{thm}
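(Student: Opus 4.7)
The plan is to bootstrap from the $L^2$-estimate of Theorem \ref{V1} via the fixed-point identity for $\hat\eta$ and the smoothing property of $S$ captured in Lemma \ref{L2}(c). Specifically, I would start from the decomposition
\[
\hat\pi - \hat\eta \;=\; T\hat\pi - S\hat\eta \;=\; (T-S)\hat\pi \;+\; S(\hat\pi - \hat\eta),
\]
which is valid because $\hat\pi$ and $\hat\eta$ are $L^1$ fixed points of $T$ and $S$, respectively, and then take the $L^\infty$-norm of both sides.

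For the first term, Lemma \ref{L2}(c) applied to the kernel $P-Q$ yields
\[
\|(T-S)\hat\pi\|_\infty \;\le\; \|P-Q\|_\infty\,\|\hat\pi\|_1 \;=\; \|P-Q\|_\infty,
\]
since $\hat\pi$ is a probability density. For the second term, Lemma \ref{L2}(c) gives
\[
\|S(\hat\pi - \hat\eta)\|_\infty \;\le\; \|Q\|_\infty\,\|\hat\pi - \hat\eta\|_1 \;\le\; \|Q\|_\infty \sqrt{\text{Vol}(D)}\,\|\hat\pi - \hat\eta\|_2,
\]
where the last inequality uses the standard embedding $\|\cdot\|_1 \le \sqrt{\text{Vol}(D)}\,\|\cdot\|_2$ on a domain of finite measure. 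This reduces everything to controlling $\|\hat\pi - \hat\eta\|_2$, which is exactly the content of Theorem \ref{V1}.

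To invoke Theorem \ref{V1}, I would bound $\|P-Q\|_2 \le \sqrt{\text{Vol}(D \times D)}\,\|P-Q\|_\infty$ and choose $\delta'(\pi)$ small enough (relative to $\delta(\pi)$ from Theorem \ref{V1} and to $\|P\|_\infty$) that (a) the $L^2$-smallness hypothesis $\|P-Q\|_2 < \delta(\pi)$ is automatic, and (b) $\|Q\|_\infty \le \|P\|_\infty + \delta'(\pi)$ remains controlled by a constant depending only on $\pi$. Assembling the three bounds gives
\[
\|\hat\pi - \hat\eta\|_\infty \;\le\; \bigl(1 + \|Q\|_\infty\sqrt{\text{Vol}(D)}\cdot C(\pi)\sqrt{\text{Vol}(D \times D)}\bigr)\,\|P-Q\|_\infty,
\]
from which the constant $C'(\pi)$ can be read off.

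The main obstacle is really a bookkeeping issue rather than a conceptual one: one must verify that the threshold $\delta'(\pi)$ and the constant $C'(\pi)$ can indeed be made to depend only on $\pi$ (equivalently, on $P$ and the constant $\alpha$ from Lemma \ref{M2}), and in particular that $\|Q\|_\infty$ stays under the control of $\|P\|_\infty$ once $\|P-Q\|_\infty$ is small. Once this is set up, the estimate is a clean consequence of the fixed-point identity, Lemma \ref{L2}(c), the finite-measure norm comparison, and Theorem \ref{V1}; no further functional-analytic machinery is required.
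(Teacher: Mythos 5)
Your proof is correct and follows essentially the same route as the paper's: a triangle-inequality splitting of the fixed-point identity, Lemma \ref{L2}(c) for both pieces, the finite-measure comparison $\|\cdot\|_1 \le \sqrt{\mathrm{Vol}(D)}\,\|\cdot\|_2$, and an appeal to Theorem \ref{V1}. The only (immaterial) difference is that you split as $(T-S)\hat\pi + S(\hat\pi-\hat\eta)$ while the paper uses the mirror decomposition $(S-T)\hat\eta + T(\hat\eta-\hat\pi)$, so you must control $\|Q\|_\infty$ by $\|P\|_\infty + \delta'(\pi)$ (which you do) whereas the paper instead invokes Remark \ref{Repieta} to bound $\|\hat\eta\|_2$ by $2\|\hat\pi\|_2$.
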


\begin{proof}
As in the proof of the previous theorem, we replace $\hat \pi$ and $\hat \eta$ with $\pi$ and $\eta$ respectively.  
Using part the fact that $\pi$ and $\eta$ are fixed by $T$ and $S$, respectively, plus the triangle inequality and c) of Lemma $\ref{L2}$, we have
\begin{eqnarray}
\|\eta-\pi\|_{\infty} = \|S\eta-T\pi\|_{\infty} &\le& \|S\eta-T\eta\|_{\infty} + \|T\eta-T\pi\|_{\infty} \nonumber\\
&\le& \|P-Q\|_{\infty}\|\eta\|_1  + \|P\|_{\infty} \|\eta-\pi\|_1 \nonumber\\
&\le& C\|P-Q\|_{\infty}\|\eta\|_2  + C\|P\|_{\infty} \|\eta-\pi\|_2, \label{AA}
\end{eqnarray}
with $C=\sqrt{\text{Vol}(D)}$. With $\delta(\pi)$ and $C(\pi)$ as in the previous theorem, define
\[
\delta'(\pi)=\frac{\delta(\pi)}{C} \ \ \text{ and } \ \ C'(\pi)=2C\|\pi\|_2 + C^2\|P\|_{\infty} C(\pi).
\]
If $\|P-Q\|_{\infty}<\delta'(\pi)$, then as mentioned previously,
\[
\|P-Q\|_{2} \le C \|P-Q\|_{\infty}<\delta(\pi).
\]
We start with $\ref{AA}$ and then use $\|\eta\|_2<2\|\pi\|_2$ and $\|\pi-\eta\|_2 \le C(\pi)\left\|P-Q\right\|_2$ from remark $\ref{Repieta}$ and theorem $\ref{V1}$ to get
\begin{align*}
\|\eta-\pi\|_{\infty} &\le C\|P-Q\|_{\infty}\|\eta\|_2  + C\|P\|_{\infty} \|\eta-\pi\|_2\\
&\le 2C\|P-Q\|_{\infty}\|\pi\|_2 + C\|P\|_{\infty} C(\pi) \|P-Q\|_2\\
\end{align*}
By collecting terms and noticing that $\|P-Q\|_2 \le C\|P-Q\|_{\infty}$, we deduce that
\begin{align*}
\|\eta-\pi\|_{\infty} 
&\le \left(2C\|\pi\|_2 + C^2\|P\|_{\infty} C(\pi)\right) \|P-Q\|_{\infty}\\
&=C'(\pi)\|P-Q\|_{\infty}
\end{align*}
\end{proof}

\begin{thm}
($L^{p}$-estimate, $2 \le p \le \infty$)

Let $2 \le p \le \infty$, there exists $\delta'(\pi), C'(\pi)>0$ such that if $P,Q \in L^p(D \times D)$ and  $\left\|P-Q\right\|_{p}<\delta'(\pi)$ then
\[
\|\hat \pi-\hat \eta\|_{p} \le C'(\pi)\left\|P-Q\right\|_{p}.
\]
\label{V3}
\end{thm}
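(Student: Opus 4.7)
The plan is to mirror the proof of Theorem \ref{V2} but replace the crude use of Lemma \ref{L2}(c) with the sharper interpolation bound in Lemma \ref{L2}(d), which is precisely designed for $2 \le p \le \infty$. The case $p = \infty$ is already Theorem \ref{V2}, so I only need to handle $2 \le p < \infty$, and the case $p = 2$ is a weaker statement than Theorem \ref{V1} (modulo converting between $\|\cdot\|_2$ and $\|\cdot\|_p$ on the finite-measure domain $D \times D$). So the real content is the intermediate range.

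The main decomposition is the one used for the $L^\infty$ bound: using $\pi = T\pi$ and $\eta = S\eta$ and the triangle inequality,
\[
\|\hat\eta - \hat\pi\|_p \;=\; \|S\hat\eta - T\hat\pi\|_p \;\le\; \|(S-T)\hat\eta\|_p + \|T(\hat\eta - \hat\pi)\|_p.
\]
Since Theorem \ref{Reg} gives $\hat\eta \in L^\infty(D) \subset L^2(D)$ and $\hat\pi$ similarly lies in every $L^q(D)$, I can apply Lemma \ref{L2}(d) to each term: to the first with kernel $P-Q$ and function $\hat\eta$, and to the second with kernel $P$ and function $\hat\eta - \hat\pi$. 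This yields
\[
\|(S-T)\hat\eta\|_p \le \|P-Q\|_p \max\{\|\hat\eta\|_1, \|\hat\eta\|_2\}, \qquad \|T(\hat\eta-\hat\pi)\|_p \le \|P\|_p \max\{\|\hat\eta-\hat\pi\|_1, \|\hat\eta-\hat\pi\|_2\}.
\]

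Next I will control the four factors. Since $\hat\eta$ is a probability density, $\|\hat\eta\|_1 = 1$. To control $\|\hat\eta\|_2$ and the $\hat\eta - \hat\pi$ norms, I will insist that $\|P-Q\|_p$ is small enough that $\|P-Q\|_2 < \delta(\pi)$. This is an embedding issue on the finite-measure space $D\times D$: Hölder's inequality gives $\|P-Q\|_2 \le \text{Vol}(D \times D)^{1/2 - 1/p}\|P-Q\|_p$ (for $p \ge 2$), so setting $\delta'(\pi) := \delta(\pi) / \text{Vol}(D\times D)^{1/2 - 1/p}$ suffices. Under this hypothesis, Remark \ref{Repieta} gives $\|\hat\eta\|_2 < 2\|\hat\pi\|_2$, Theorem \ref{V1} gives $\|\hat\eta - \hat\pi\|_2 \le C(\pi)\|P-Q\|_2$, and the finite-measure embedding $L^2(D) \hookrightarrow L^1(D)$ gives $\|\hat\eta - \hat\pi\|_1 \le \sqrt{\text{Vol}(D)}\,C(\pi)\|P-Q\|_2$.

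Combining everything and using $\|P-Q\|_2 \le \text{Vol}(D \times D)^{1/2 - 1/p}\|P-Q\|_p$ once more to convert the remaining $\|P-Q\|_2$ factor on the right, each of the two terms becomes a constant depending only on $\pi$, $\text{Vol}(D)$ and $\|P\|_p$ times $\|P-Q\|_p$, and I can take $C'(\pi)$ to be the sum of these constants. There is no real obstacle: this is essentially bookkeeping. The one point to be slightly careful about is that Lemma \ref{L2}(d) requires the kernel to lie in $L^p(D \times D)$, which for the second term means I need $\|P\|_p < \infty$; this is built into the hypothesis $P, Q \in L^p(D\times D)$ of the theorem, so no additional assumption is introduced.
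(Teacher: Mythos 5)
Your proposal is correct and follows essentially the same route as the paper: the same decomposition $\|S\hat\eta - T\hat\pi\|_p \le \|(S-T)\hat\eta\|_p + \|T(\hat\eta-\hat\pi)\|_p$, the same application of Lemma \ref{L2}(d) to each term, and the same closing via Remark \ref{Repieta}, Theorem \ref{V1}, and the finite-measure norm embeddings. The only (harmless) difference is that you track the embedding constant $\mathrm{Vol}(D\times D)^{1/2-1/p}$ explicitly and use $\|\hat\eta\|_1 = 1$ directly, where the paper absorbs both into a generic constant $C=\sqrt{\mathrm{Vol}(D)}$.
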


\begin{proof}
As before, we replace $\hat \pi$ and $\hat \eta$ with $\pi$ and $\eta$ respectively. 
Applying Lemma $\ref{L2}$ (part d), noticing that $ \eta$ and $\pi$ belong to $L^{2}(D)$, we have:
	\[
	\|S\eta-T\eta\|_{p} \le \|P-Q\|_{p} \max \{\|\eta\|_1,\|\eta\|_2\}
	\]
and
	\[
	\|T\eta-T\pi\|_{p} \le \|P\|_{p} \max \{\|\eta-\pi\|_1,\|\eta-\pi\|_2 \}.
	\]
Using the fact that $\pi$ and $\eta$ are fixed by $T$ and $S$, respectively, plus the triangle inequality and c) of Lemma $\ref{L2}$, we have
\begin{eqnarray}
\|\eta-\pi\|_{p} = \|S\eta-T\pi\|_{p} &\le& \|S\eta-T\eta\|_{p} + \|T\eta-T\pi\|_{p} \nonumber\\
&\le& \|P-Q\|_{p}\max \{\|\eta\|_1,\|\eta\|_2\}  + \|P\|_{p}\max \{\|\eta-\pi\|_1,\|\eta-\pi\|_2\} \nonumber\\
&\le& C\|P-Q\|_{p}\|\eta\|_2  + C\|P\|_{p} \|\eta-\pi\|_2, \label{BB}
\end{eqnarray}
with $C=\sqrt{\text{Vol}(D)}$. 
The rest of the proof concludes as in the proof of the previous theorem.
\end{proof}

\section{Extension to non-compact support distributions}

While most of the assumption of the method on the ergodicity of the Markov chains are quite general, one restriction of the method comes from the assumption of bounded parameter space $D$. Since the key ideas of our analysis of sensitivity of the invariant measures rely on moving back and forth between the $L^p$-norms, this condition could not be easily removed from the framework.

However, it is worth noting that for distributions with non-compact support, a variation of the Griddy Gibbs sampling method can be developed as followed: first, a rectangular domain $D$ is chosen by prior knowledge about $\pi$, then the Griddy Gibbs sampling with $\pi_{new}= \pi|_D$ (normalized by a constant) is proceeded as usual. By our previous analyses, the Monte Carlo Markov chains generated by this process will have a unique invariant measure $\eta$ whose distance to $\pi$ can be estimated by the following theorem

\begin{thm}

Let $2 \le p \le \infty$. Assume that $\pi$ has non-compact support on $\mathcal{R}^d$ and that there exist $C_1, C_2>0$ such that:
\[
\int_{\mathcal{R}^d}{\|x\|_1 \hat \pi(x)^p~dx} ~\le C_1 \h \text{and} \h 
\int_{\mathcal{R}^d}{\|x\|_1 \hat \pi(x)~dx} ~\le C_2
\]
where $\|x\|_1= |x_1|+...+|x_d|$.
Let $D_t=\{x\in \mathcal{R}^d: \|x\|_{\infty}>t\}$ where $\|x\|_{\infty} = \max_{i}{|x_i|}$.

There exists $\delta'(\pi), C'(\pi)>0$ such that if $P,Q \in L^{p}(D \times D)$, $\left\|P-Q\right\|_{p}<\delta'(\pi)$ and $t \ge C_2/2$ then
\begin{equation}
\|\hat \pi-\hat \eta\|_{p} \le C'(\pi,D_t)\left\|P-Q\right\|_{p}+ \frac{C_2}{2t}\|\hat \pi\|_p + \frac{C_1}{t\|\hat \pi\|_p}
\label{star}
\end{equation}
\label{V4}
\end{thm}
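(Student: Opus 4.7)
The natural approach is a \emph{truncation-plus-renormalization} argument that reduces the non-compact case to the compact case already handled by Theorem \ref{V3}. Let $D = \{x \in \mathbb{R}^d : \|x\|_\infty \le t\} = \mathbb{R}^d \setminus D_t$ denote the rectangular window on which the Griddy Gibbs chain actually lives, and introduce the truncated, renormalized target
\[
\pi_{new}(x) \;=\; \frac{\hat\pi(x)\,\mathbf{1}_{D}(x)}{Z}, \qquad Z \;=\; \int_{D} \hat\pi(x)\,dx.
\]
By construction $\pi_{new}$ is a probability density supported on the compact set $D$, and $\hat\eta$ (viewed as a function on $\mathbb{R}^d$ by zero-extension outside $D$) is the Griddy Gibbs invariant density associated with the transition $Q$ on $D$. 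The triangle inequality then gives
\[
\|\hat\pi - \hat\eta\|_p \;\le\; \|\hat\pi - \pi_{new}\|_p + \|\pi_{new} - \hat\eta\|_p.
\]

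Second, apply Theorem \ref{V3} to the bounded-support problem on $D$ with target $\pi_{new}$. Since all the hypotheses of Section 4 are inherited on $D$, this produces a constant $C'(\pi_{new})$ such that $\|\pi_{new} - \hat\eta\|_p \le C'(\pi_{new})\|P-Q\|_p$ whenever $\|P-Q\|_p$ is small. I absorb the $\pi_{new}$-dependence into the constant written $C'(\pi,D_t)$ in the statement, so that the first term of \eqref{star} is obtained.

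Third, bound $\|\hat\pi - \pi_{new}\|_p$ by splitting the integration domain. On $D_t$ we have $\pi_{new}=0$, so the contribution is $\|\hat\pi\,\mathbf{1}_{D_t}\|_p$, and I use the pointwise inequality $\mathbf{1}_{\{\|x\|_\infty > t\}} \le \|x\|_1/t$ together with the first moment hypothesis $\int \|x\|_1\hat\pi^p \le C_1$ to produce the $C_1/(t\|\hat\pi\|_p)$ correction. On $D$ we have $|\hat\pi - \pi_{new}| = \hat\pi (1-Z)/Z$; the second moment hypothesis combined with $\|x\|_\infty \le \|x\|_1$ yields $1-Z = \int_{D_t}\hat\pi \le C_2/t$, and the condition $t \ge C_2/2$ forces $Z \ge 1/2$, so $(1-Z)/Z$ is controlled by a multiple of $C_2/t$, giving the $\frac{C_2}{2t}\|\hat\pi\|_p$ term after the usual constant bookkeeping. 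Collecting the three contributions yields \eqref{star}.

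The main obstacle, I expect, is not the tail estimates (which are routine once the moment assumptions are written down) but rather the dependence of the constant $C'$ on the truncated density $\pi_{new}$. Tracing through the proofs of Theorems \ref{V1}--\ref{V3}, this constant ultimately comes from the spectral-gap parameter $\alpha$ of Lemma \ref{M2} applied to the Gibbs operator associated with $\pi_{new}$, and one must check that $\alpha$ does not degenerate as $t \to \infty$: that is, that the eigenvalue-$1$ eigenspace of the truncated operator remains uniformly separated from the rest of the spectrum. This is where I would have to be careful, since $\pi_{new} \to \hat\pi$ only in $L^1$ and compactness of the integral operator must be preserved through the truncation. Once this spectral stability is verified, the final bound follows by straightforward assembly of the three estimates above.
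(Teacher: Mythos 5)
Your proposal is essentially the paper's own proof: truncate $\hat\pi$ to the box $\{\|x\|_\infty\le t\}$, renormalize, invoke the compact-support estimate of Theorem \ref{V3} for the renormalized target, and control both the tail mass and the renormalization error via Markov's inequality applied to the two moment hypotheses. One shared wrinkle: deducing $Z\ge 1/2$ from $1-Z\le C_2/t$ actually requires $t\ge 2C_2$ rather than $t\ge C_2/2$ (the paper's proof itself switches to $t\ge 2C_2$ despite the statement), and your worry about the spectral constant degenerating as $t\to\infty$ is moot since the theorem allows $C'$ to depend on $D_t$.
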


\begin{proof}

We denote $f = \hat \pi^p/ \|\hat \pi\|_p$ and let $X$ be a random variable with density function $f$. 
By Markov's inequality, we have for $i=1,2,...d$
\begin{eqnarray*}
\mathbb{P}[|X_i| > t] &\le& \frac{1}{t} \int_{\mathcal{R}^d}{|x_i|f(x)~dx} \\
&=& \frac{1}{t\|\hat \pi\|_p} \int_{\mathcal{R}^d}{|x_i|\hat \pi^p(x)~dx}
\end{eqnarray*}
Hence
\[
\left\|\hat \pi\right\|_{L^p(\mathcal{R}^d \setminus D_t)}=\mathbb{P}[\|X\|_{\infty} > t] \le \frac{\int_{\mathcal{R}^d}{\|x\|_1 \hat \pi^p(x)~dx}}{t\|\hat \pi\|_p} 
\]
By a similar argument, we have
\[
\left\|\hat \pi\right\|_{L^1(\mathcal{R}^d \setminus D_t)}\le \frac{1}{t}\int_{\mathcal{R}^d}{\|x\|_1 \hat \pi(x)~dx} \le \frac{C_2}{t}
\]
On the other hand, results for distribution with compact support in $D$ implies
\[
\left\|\hat \eta - \frac{\hat \pi}{\int_{D_t}{\hat\pi}}\right\|_{L^p(D)} \le  C'(\pi,D_t)\|P-Q\|_p,
\]
We deduce that, for $t \ge 2C_2$, we have
\begin{eqnarray*}
\|\hat \pi-\hat \eta\|_{L^p(\mathcal{R}^d)} &\le& \|\hat \pi-\hat \eta\|_{L^p(D)} + \left\|\hat \pi\right\|_{L^p(\mathcal{R}^d \setminus D_t)} \\
&\le& \left\|\hat \eta - \frac{\hat \pi}{\int_{D_t}{\hat\pi(x)~dx}}\right\|_p +\frac{\int_{\mathcal{R}^d \setminus D_t}{\hat\pi(x)~dx}}{\int_{D_t}{\hat\pi}} \|\hat \pi\|_p+ \left\|\hat \pi\right\|_{L^p(\mathcal{R}^d \setminus D_t)}\\
&\le& C'(\pi,D_t)\left\|P-Q\right\|_{p}+ \frac{C_2}{2t}\|\hat \pi\|_p + \left\|\hat \pi\right\|_{L^p(\mathcal{R}^d \setminus D_t)}\\ 
&\le& C'(\pi,D_t)\left\|P-Q\right\|_{p}+ \frac{C_2}{2t}\|\hat \pi\|_p + \frac{C_1}{t\|\hat \pi\|_p}
\end{eqnarray*}

\end{proof}

\begin{cor}
Let $2\le p \le \infty$ and assume that $\pi$ has non-compact support on $\mathcal{R}^d$ and that there exists $C_3, C_4>0$ so that:
\begin{enumerate}
\item $\int_{\mathcal{R}^d}{|x|^2 \hat \pi(x)~dx} \le C_3 < \infty$
and
\item $\|\hat\pi\|_{L^{2p-1}} \le C_4 < \infty$
\end{enumerate}
Then result (\ref{star}) is true  with $C_1=\sqrt{C_3C_4^p}$ and $C_2=1+C_4$.
\end{cor}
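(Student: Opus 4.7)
The plan is to verify the two hypotheses of Theorem \ref{V4}, namely the moment bounds $\int_{\RR^d} \|x\|_1 \hat\pi(x)^p\,dx \le C_1$ and $\int_{\RR^d} \|x\|_1 \hat\pi(x)\,dx \le C_2$, using conditions (1) and (2) of the corollary, and then to invoke Theorem \ref{V4} directly to obtain \eqref{star}.

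For the first bound I would apply Cauchy--Schwarz with the splitting $\|x\|_1\hat\pi^p = (\|x\|_1\hat\pi^{1/2})\cdot\hat\pi^{p-1/2}$, yielding
\[
\int \|x\|_1 \hat\pi^p\,dx \;\le\; \left(\int \|x\|_1^2 \hat\pi\,dx\right)^{1/2}\left(\int \hat\pi^{2p-1}\,dx\right)^{1/2}.
\]
The first factor is controlled by $\sqrt{C_3}$ (up to the constant from norm equivalence between $\|\cdot\|_1$ and the norm $|\cdot|$ appearing in hypothesis (1)), while the second factor equals $\|\hat\pi\|_{2p-1}^{(2p-1)/2}\le C_4^{(2p-1)/2}$. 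For the second bound I would partition $\RR^d=\{\|x\|_1\le 1\}\cup\{\|x\|_1>1\}$: on the first set $\|x\|_1\le 1$ gives a contribution of at most $\int \hat\pi=1$; on the second set, pairing $\|x\|_1$ against $\hat\pi$ by H\"older's inequality using the $L^{2p-1}$-bound of hypothesis (2) (or, equivalently, using the pointwise inequality $\|x\|_1\le\|x\|_1^2$ in that region together with condition (1) and then re-expressing the tail via $C_4$) absorbs the tail and produces a contribution bounded by $C_4$, so the total is at most $1+C_4$.

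The main subtlety is exponent bookkeeping rather than any conceptual obstacle: the direct Cauchy--Schwarz argument naturally produces $C_4^{(2p-1)/2}$ rather than $C_4^{p/2}$, so matching the stated constant $\sqrt{C_3 C_4^p}$ requires either interpolating $\|\hat\pi\|_p$ between $\|\hat\pi\|_1=1$ and $\|\hat\pi\|_{2p-1}\le C_4$ via log-convexity of $L^r$-norms (to pick up exactly $C_4^{p/2}$ in the second factor), or absorbing the remaining powers into the definition of $C_4$. Likewise, pinning down the second constant at exactly $1+C_4$ requires choosing the H\"older split for the $\{\|x\|_1>1\}$ region so that the $L^{2p-1}$-norm of $\hat\pi$ (rather than the second moment) controls the tail. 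Once these exponent choices are made, both hypotheses of Theorem \ref{V4} are in hand and the estimate \eqref{star} follows immediately with the stated constants.
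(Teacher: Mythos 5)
Your approach is the same as the paper's: Cauchy--Schwarz with the split $\|x\|_1\hat\pi^p=(\|x\|_1\hat\pi^{1/2})\cdot\hat\pi^{p-1/2}$ for the first hypothesis of Theorem \ref{V4}, and the decomposition $\RR^d=\{\|x\|_1\le 1\}\cup\{\|x\|_1>1\}$ for the second, followed by a direct invocation of the theorem. Two remarks on the ``exponent bookkeeping'' you worry about. First, you are right that the honest computation gives $\left(\int\hat\pi^{2p-1}\right)^{1/2}=\|\hat\pi\|_{2p-1}^{(2p-1)/2}\le C_4^{(2p-1)/2}$, so the natural constant is $\sqrt{C_3C_4^{2p-1}}$ rather than the stated $\sqrt{C_3C_4^{p}}$; this discrepancy is in the paper itself (whose proof even writes the bound with mismatched constant names), and it is not repairable by log-convexity interpolation in general --- $C_4^{(2p-1)/2}\le C_4^{p/2}$ only when $C_4\le 1$ --- so the correct fix is simply to state $C_1=\sqrt{C_3C_4^{2p-1}}$, not to chase the printed exponent. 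Second, your primary suggestion for the tail of $\int\|x\|_1\hat\pi$, namely H\"older against the $L^{2p-1}$-bound, does not work: the conjugate norm $\bigl\|\,\|x\|_1\mathbf{1}_{\{\|x\|_1>1\}}\bigr\|_{q'}$ is infinite on the unbounded region. The working route is your parenthetical alternative (and the paper's actual argument): use $\|x\|_1\le\|x\|_1^2$ on $\{\|x\|_1>1\}$ and the second-moment hypothesis, which yields $1+C_3$ (up to the constant relating $\|\cdot\|_1$ to the norm $|\cdot|$ in hypothesis (1)); the stated $1+C_4$ is evidently a typo for $1+C_3$, and there is no way to ``re-express the tail via $C_4$.'' With these constants corrected, your verification of the two hypotheses and the appeal to Theorem \ref{V4} are exactly what the paper does.
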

That is, if prior estimates on the second moment of $\pi$ and the $L^{2p-1}$-norm of $\hat \pi$ are available, the Griddy Gibbs algorithm can be adjusted accordingly to produce a good estimate on the distance between the two invariant measures.

\begin{proof}
By Holder's inequality with 
\[
u(x)=\|x\|_1\hat\pi(x)^{1/2}, ~~~ v(x)=\hat\pi^{p-1/2}
\]
we have
\begin{eqnarray*}
\int_{\mathcal{R}^d}{\|x\|_1 \hat \pi^p(x)~dx} &\le& \left(\int_{\mathcal{R}^d}{\|x\|_1^2 \hat \pi(x)~dx}\right)^{1/2}\left(\int_{\mathcal{R}^d}{\hat \pi^{2p-1}(x)~dx}\right)^{1/2} \\
&\le& \sqrt{C_1 C_2^p}.
\end{eqnarray*}
and
\[
\int_{\mathcal{R}^d}{\|x\|_1 \hat \pi(x)~dx} ~\le 
\int_{\|x\|_1 \le 1}{\hat \pi(x)~dx} + 
\int_{\|x\|_1 > 1}{\|x\|^2_1 \hat \pi(x)~dx} \le 1+ C_4
\]
\end{proof}

\section{Numerical examples}
In this section, we provide numerical examples to illustrate our theoretical findings and demonstrate the utility of the Griddy Gibbs sampling method. First, we validate the estimates derived in previous sections in a simple 2D example. We then proceed to investigate the performance of the Griddy Gibbs sampling in a practical example arising from systems biology, in which it is necessary to employ the Griddy Gibbs sampling method, and demonstrate the use of the method in making inferences about the system.

\subsection{A 2D example}

In this example, we investigate the performance of the Griddy Gibbs sampling algorithm on grids of various resolutions in a simple 2D example. The chosen distribution for has the following density function
\[
\pi(x,y)=\frac{1}{2}Beta \left(\frac{x+1}{2},2,5\right)*Beta \left(\frac{y+1}{2},2,5 \right)
\]
\[
+~ \frac{1}{2}Beta \left (\frac{x+1}{2},2,2 \right)*Beta \left(\frac{y+1}{2},2,2 \right)
\]
where $Beta(x,\alpha,\beta)$ is the density of the one-dimensional Beta distribution with parameter $\alpha$ and $\beta$. 

This distribution was chosen specifically to illustrate the developed framework in the case of compact support: it has compact support in its domain $[-1,1] \times [-1,1]$ and has non-independent components, but the 1D marginal density functions can be obtained in simple form:
\[
\pi_X(x) = \frac{1}{2}Beta \left(\frac{x+1}{2},2,5\right) + \frac{1}{2}Beta \left (\frac{x+1}{2},2,2 \right).
\]
\begin{figure}
\centering
\mbox{\subfigure{\includegraphics[width=0.45\textwidth]{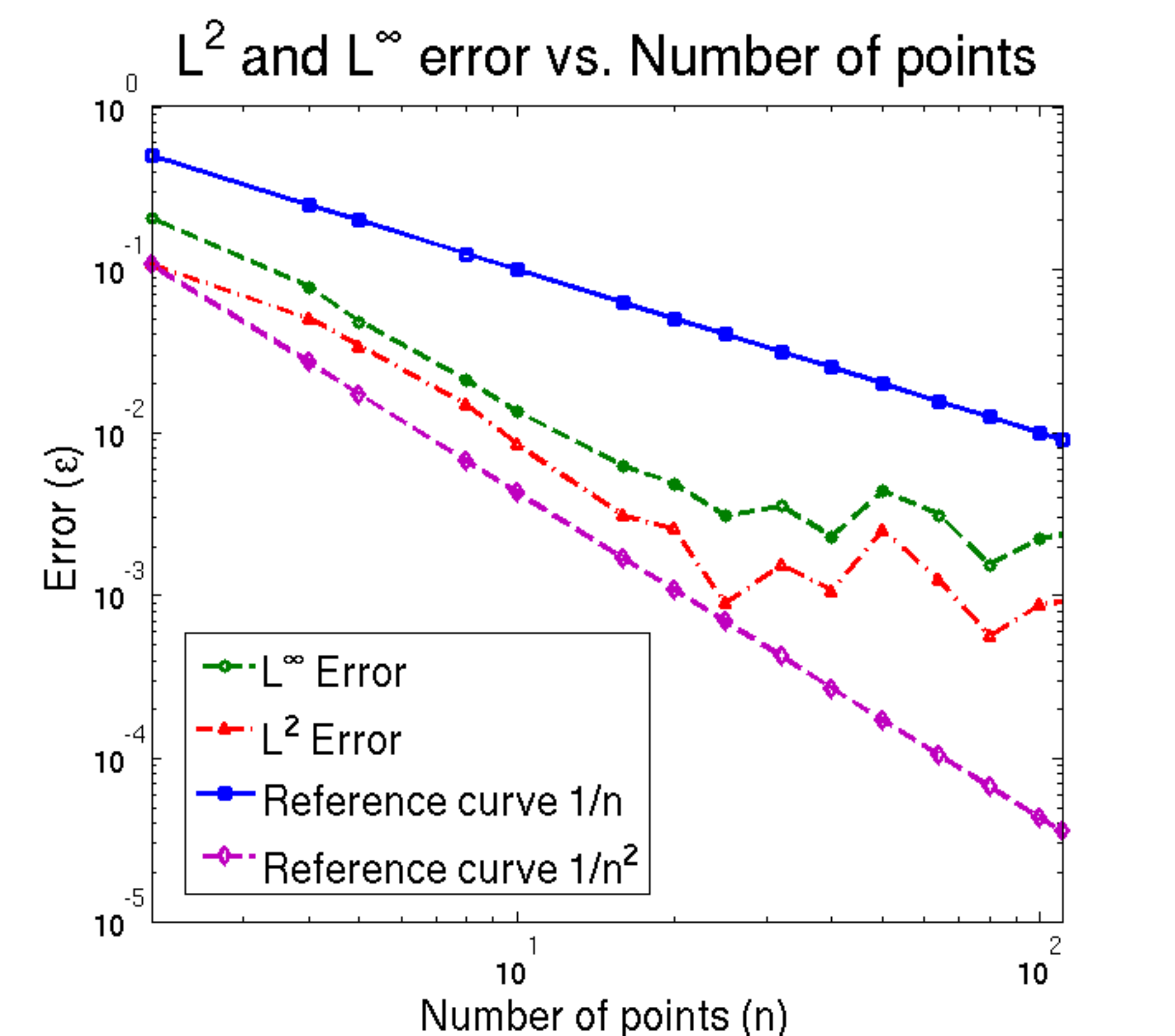}}\quad
\subfigure{\includegraphics[width=0.45\textwidth]{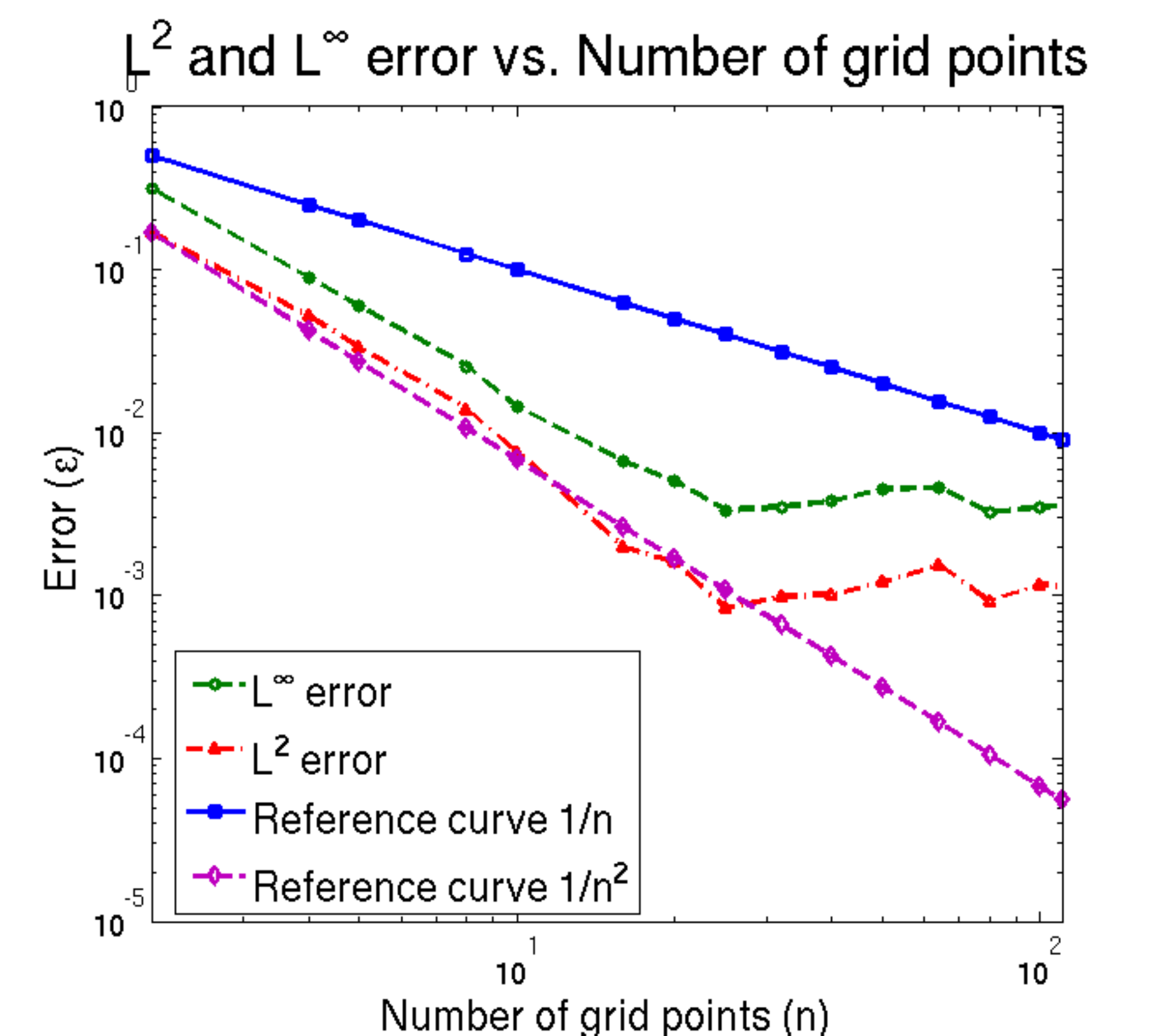} }}
\caption{\footnotesize Left: Error of the 1D marginal empirical cumulative distribution function, and Right: error of the empirical cumulative distribution function, both as a function of the number of points used in the approximation grid.}
\end{figure}

Using this probability distribution, we illustrate the estimates provided in previous sections, by expressing the $L^2$ and $L^{\infty}$ distance between the estimator (using Griddy Gibbs) and the true distribution of interest in terms of the number of points used in the grid of approximation.
For various number number of points $n$ used in the grid of approximation, we use standard linear interpolation on $n$ equally spaced points in the interval $[-1, 1]$ to approximate the 1-dimensional conditional distributions. 
For each grid, a Griddy Gibbs chain of length $10^5$ is generated.
We then use the sampled points to estimate the empirical cumulative distribution function (ECDF) and the 1D marginal ECDF of the invariant distribution of the chains. 
Finally, the $L^2$ and $L^{\infty}$ distance between the estimated ECDFs with different number of grid points and the true CDF are calculated. 

We note that in the context of our example, it is more convenient to work with CDFs rather than with PDFs for two main reasons: (i) CDFs can be approximated using nonparametric estimators; and (ii) there is a well-developed theoretical machinery for the comparison of CDFs using such estimators. 
Moreover, it is well-known that the ECDF is a non-parametric, unbiased estimator that converges uniformly to the true CDF (a result known as the Glivenko - Cantelli theorem \cite{LK}).

The results are illustrated in Figure 2. The error of both ECDF and the marginal ECDF of the first variable decrease faster than $O(\frac{1}{n})$ and approximately as fast as $O(\frac{1}{n^2})$ when the number of the grids point $n$ increases, until it reaches a level at which the error of the Griddy Gibbs sampling is dominated by the error of the Monte Carlo simulation. Since the accuracy of standard 1D linear approximation method is bounded by $O(\frac{1}{n})$, and can be as fast as $O(\frac{1}{n^2})$ if the function has bounded second derivative, this confirms our theoretical results about linear dependency between error of the 1D approximation, and the distance from the estimated distribution to the true distribution of interest.

\subsection{An example in systems biology.}

In this example, we consider a mathematical model of the T-cell signaling pathway proposed by Lipniacki et al. in \cite{LB}. The behaviour of the system is modelled as an ODE system controlled by 19 different parameters with 37 state variables and fixed initial conditions:  
\begin{align*}
\dot{x}&=\alpha(\omega,x) \mbox{\hspace{.3in}} \text{(System of ODEs)}\\
x(0)&=x_0(\omega) \mbox{\hspace{.4in}} \text{(Initial conditions)}\\
y(t)=f(\omega,t)&=\beta(\omega,x(t)) \mbox{\hspace{.3in}} \text{(Output)}
\end{align*}
Here $x=(x_1,x_2,...,x_{n_x})\in M \subset \RR^{n_x}$ is the state variable, with M a subset of $\RR^{n_x}$ containing the initial state, and $f(\omega,t) \in \RR$ is the output response (system dynamics). In the scope of this paper, we are interested in the dynamics of pZap, one of the state variables of the system. The vector of unknown parameters is denoted by $\omega=(\omega_1,...,\omega_N) \in \RR^N$ and is assumed to belong to a subset $\Omega$ of $\RR^N$. These functions and initial conditions depend on the parameter vector $\omega \in \Omega$.

The traditional approach to study such a system is to estimate values of the parameters from observations. However, in the field of systems biology, usually it is not possible to estimate all parameters in a given model, in particular if the model is complex and the data is sparse and noisy. Thus, to represent explicitly the state of knowledge, it is best to consider not a single parameter valuation but the whole space of uncertain parameters. The uncertainty in parameter values is often characterized by a probability distribution $\pi(\omega)$ on the set of all possible parameter values, based on how the output of the system driven by a particular parameter valuation fits previous data. This gives a distribution with density
\begin{equation}  \label{EDEDist}
\hat \pi(\omega)=c_n ~ \exp\left(-\sum_{i=1}^n{\left|f(\omega,t_i)-d(t_i)\right|^2}\right),
\end{equation}
where $c_n$ is a normalizing constant, $(t_1,d_1),...(t_n,d_n)$ is the set of previous data.

Inference about the system will be made based on $\pi$. For example, in \cite{D,M}, the optimal experiment is chosen at the time point where the maximum value of the normalized variance of the outputs with respect to $\pi$ is achieved. Another example was given in \cite{DRB1} where the expected dynamics estimator to recover the correct system dynamics is defined as the expected value of the system dynamics with respect to the distribution $\pi$. 

This motivates the problem of sampling with respect to the distribution $\pi$. As noted in the introduction, the use of the standard Gibbs sampling method is hindered by two factors: first, there is no closed-form formula for the distribution $\pi$ or for the corresponding one-dimensional conditional distributions; second, the evaluation of the unnormalized distribution at one point is computationally expensive (it is equivalent to solving a high dimensional system of differential equations). It is then necessary to approximate the conditional distribution by functions of simpler forms. The Griddy Gibbs method therefore is a suitable choice for this sampling process.

In this particular example, we restrict the analysis to the five most sensitive parameters with respect to perturbation. This choice is based on previous knowledge about the dynamics of the system and on the result of a global sensitivity analysis using sparse grid interpolation(\cite{GTB}).

To further reduce the computational cost, we also employ a sparse grid interpolant to approximate the output of the ODE system. That is, the output functions of the system of ODEs are evaluated on a sparse grids of $10^5$ points on the parameter space, then the method of sparse grid interpolation is employed to approximate the outputs at other sets of parameter values. Moreover, the one-dimensional conditional distributions are then approximated by piecewise linear functions on grids of fineness $\delta=0.2$ (which corresponds to a grid with 11 equally spaced points). It is worth noting that although this is a two-leveled approximation, it still fits into the framework developed in previous sections.

We will compare the performance of the Griddy Gibbs sampling with the variation of Gibbs sampling suggested by Tierney et al. in \cite{T}. In Tierney's algorithm, a Metropolis chain is embedded to ensure that the equilibrium distribution is exactly $\pi$ even on a coarse grid. The drawback is that the computational cost is at least twice as much as Griddy Gibbs sampling using the same grid. Moreover, the algorithm is more difficult to set up and is restricted to piecewise linear and piecewise constant approximations. 

In Figure~\ref{conditional}, we use samples from Griddy Gibbs and from Tierney's algorithm  to compare the conditional and marginal distribution derived from the ECDFs. In the left panel, we compare the conditional distributions (using samples from Griddy Gibbs and Tierney's algorithm) of the second parameter on the first paramter, for various values of this parameter. In the right panel, the difference between the two marginal joint distributions of the first and the second variable are computed. We also compare the difference between the two marginal joint distributions of the first and the second variable while using various numbers of samples in Figure~\ref{SqrtN}. The results from Figure~\ref{conditional} and Figure~\ref{SqrtN} suggest that the Griddy Gibbs sampling method is as effective as  Tierney's algorithm (whose convergence is also guaranteed theoretically) in generating Markov chains with with respect to a given invariant measure: the difference between the two marginal distributions is of the same magnitude as the error of the Monte Carlo method itself. 

We then investigate the performance of the Griddy Gibbs sampling in making inferences about dynamics. For this we consider the Expected Dynamics Estimator based on one single simulated data point.  This generates a distribution $\pi_1$ as in \eqref{EDEDist} with $n=1$, and we then use this distribution to estimate the system dynamics by
\[
\hat{D}_1(t)=E_{\pi_1(\omega)}[f(\omega,t)].
\]
The results are provided in Figure~\ref{EDE} (Left). The expected dynamics are calculated using the empirical mean of the output values on the previous two sets of samples. Once again, the performance of the Griddy Gibbs sampling is as good as Tierney's algorithm in computing the expected dynamics. 

Finally, Figure~\ref{EDE} (Right) compares the auto-correlation coefficients of the Monte Carlo Markov chains generated by the two algorithms. To compute the auto-correlation coefficients, two Monte-Carlo Markov chains of length $10^5$ were generated by the two algorithms, respectively. The figure illustrates the fact that not only is the computational cost of Tierney's algorithm (to generate one instance of the chain) higher, but also its auto-correlation function converges (to zero) at a much lower rate. In this particular example, if one wants to get two sets of i.i.d samples with the same number of points by both algorithms, the computational cost for Tierney's algorithm is at least ten times that of the cost for Griddy Gibbs. 

\begin{figure}
\centering
\mbox{\subfigure{\includegraphics[width=0.45\textwidth]{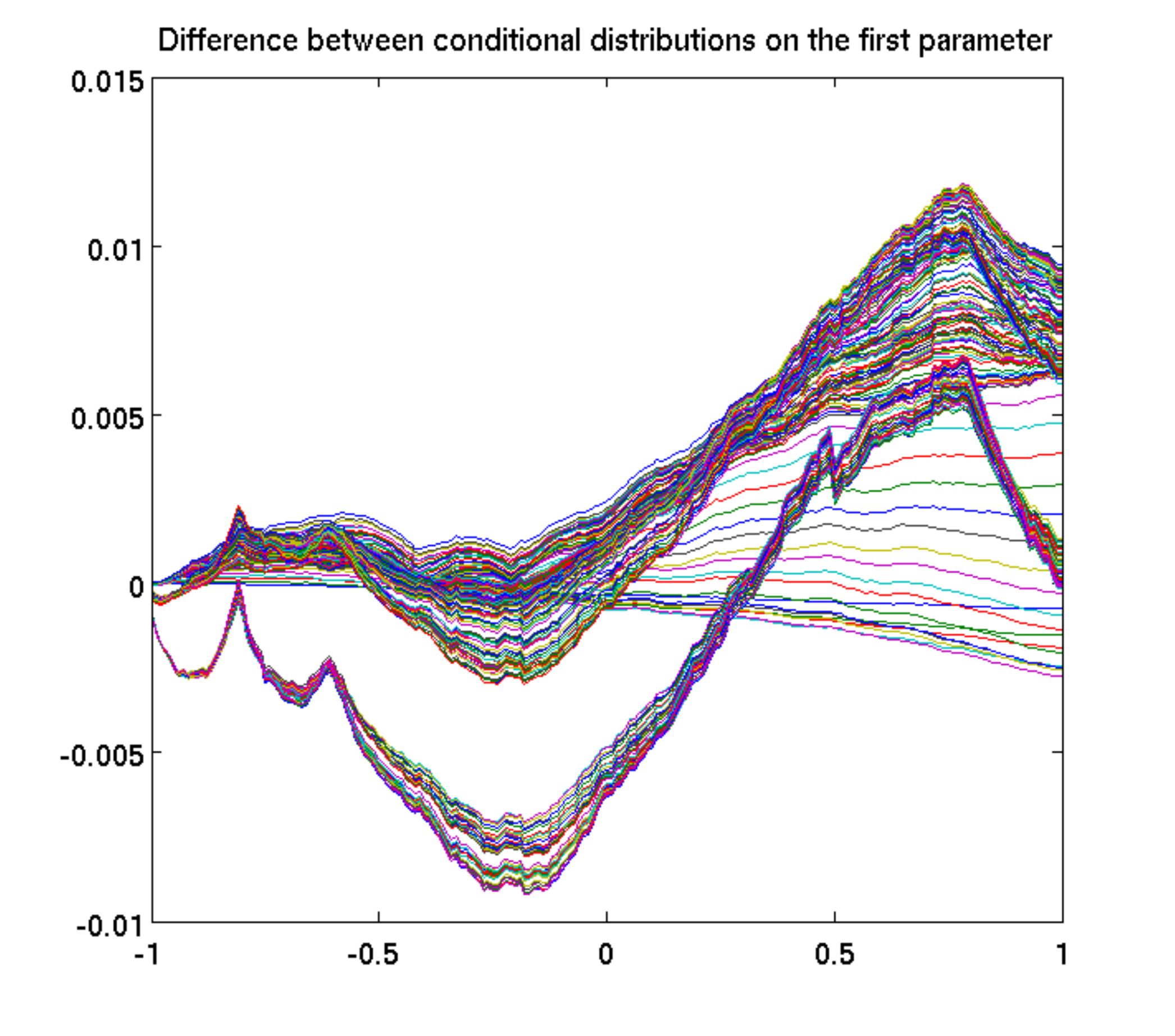}}\quad
\subfigure{\includegraphics[width=0.45\textwidth]{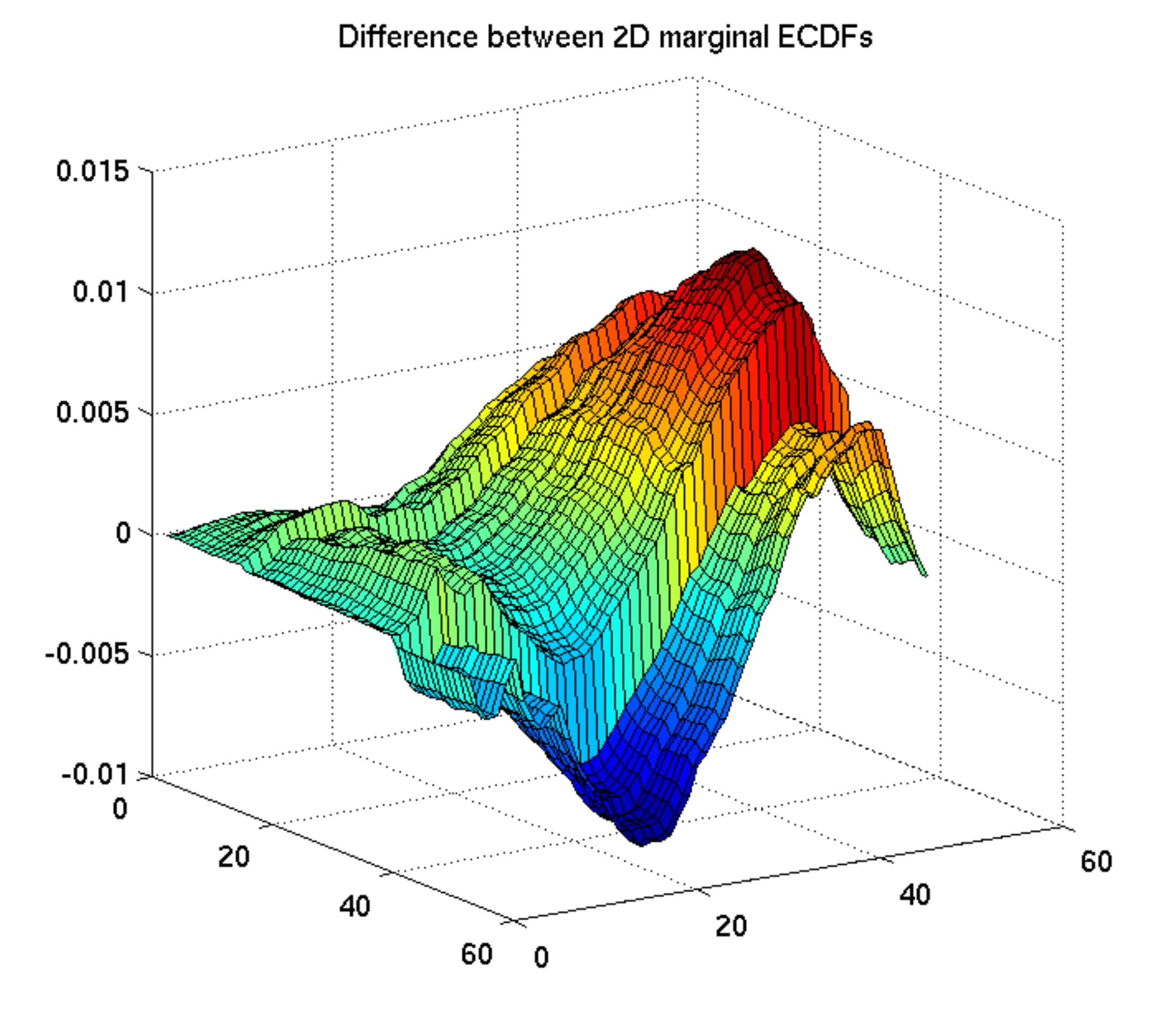} }}
\caption{\footnotesize Conditional and marginal distribution for the T-cell model. Left: The difference between the conditional distributions on the first parameters (one curve for each value of this parameter). Right: The difference between the marginal joint distributions of the first two parameters, achieved from Griddy Gibbs and Tierney's algorithm. Figure 4 shows that the differences between corresponding ECDFs are of the same magnitude as the error of the Monte Carlo method ($O(\frac{1}{\sqrt{N}})$, where N is the number of samples)}
\label{conditional}
\end{figure}

\begin{figure}
\centering
\includegraphics[width=0.7 \textwidth]{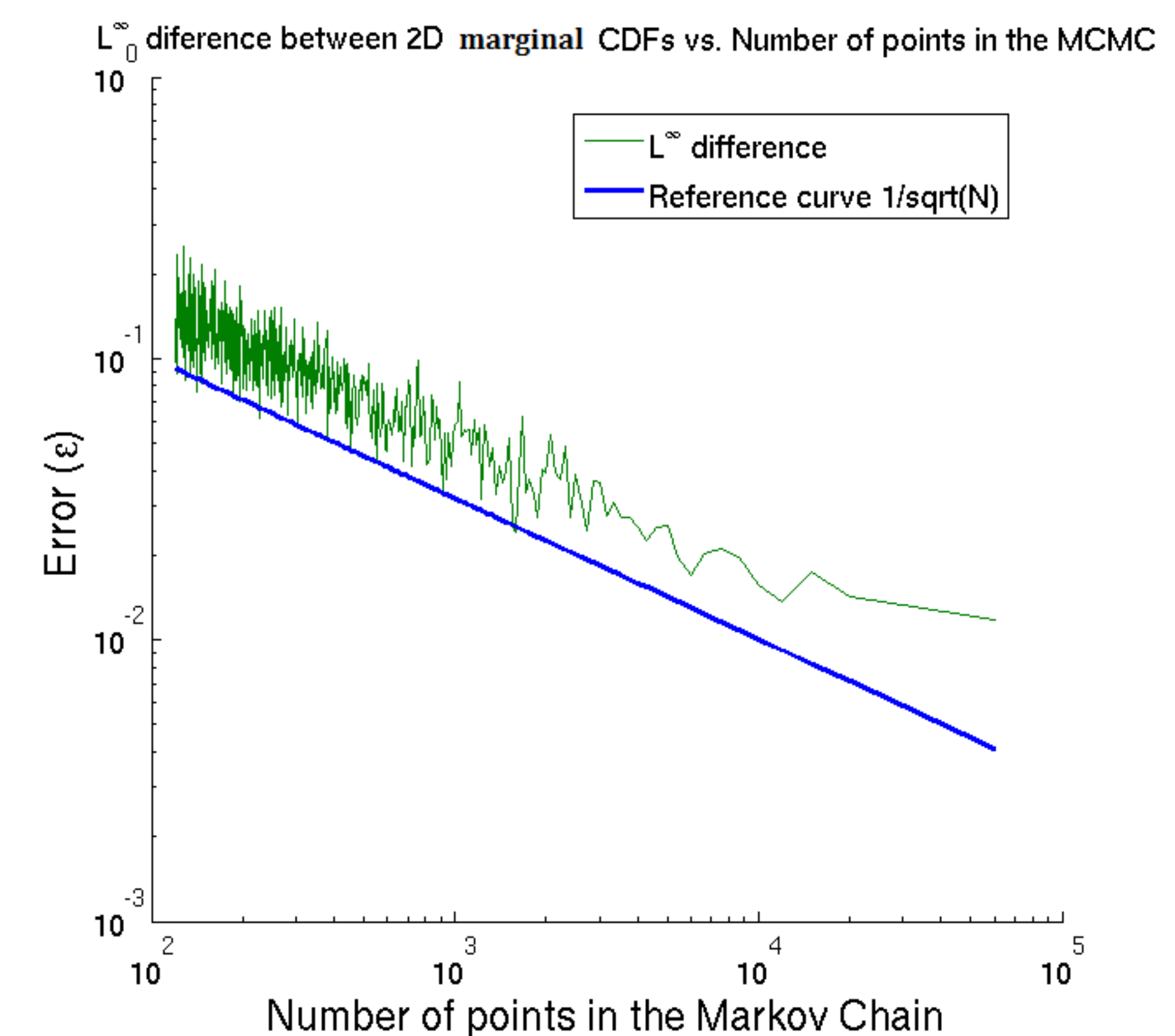}
\caption{The difference between the marginal distributions computed by Griddy Gibbs and Tierney's algorithm is of the same magnitude as the error of the Monte Carlo method itself ($O(\frac{1}{\sqrt{N}})$, where N is the number of samples).}
\label{SqrtN}
\end{figure}

\begin{figure}
\centering
\mbox{\subfigure{\includegraphics[width=0.45\textwidth]{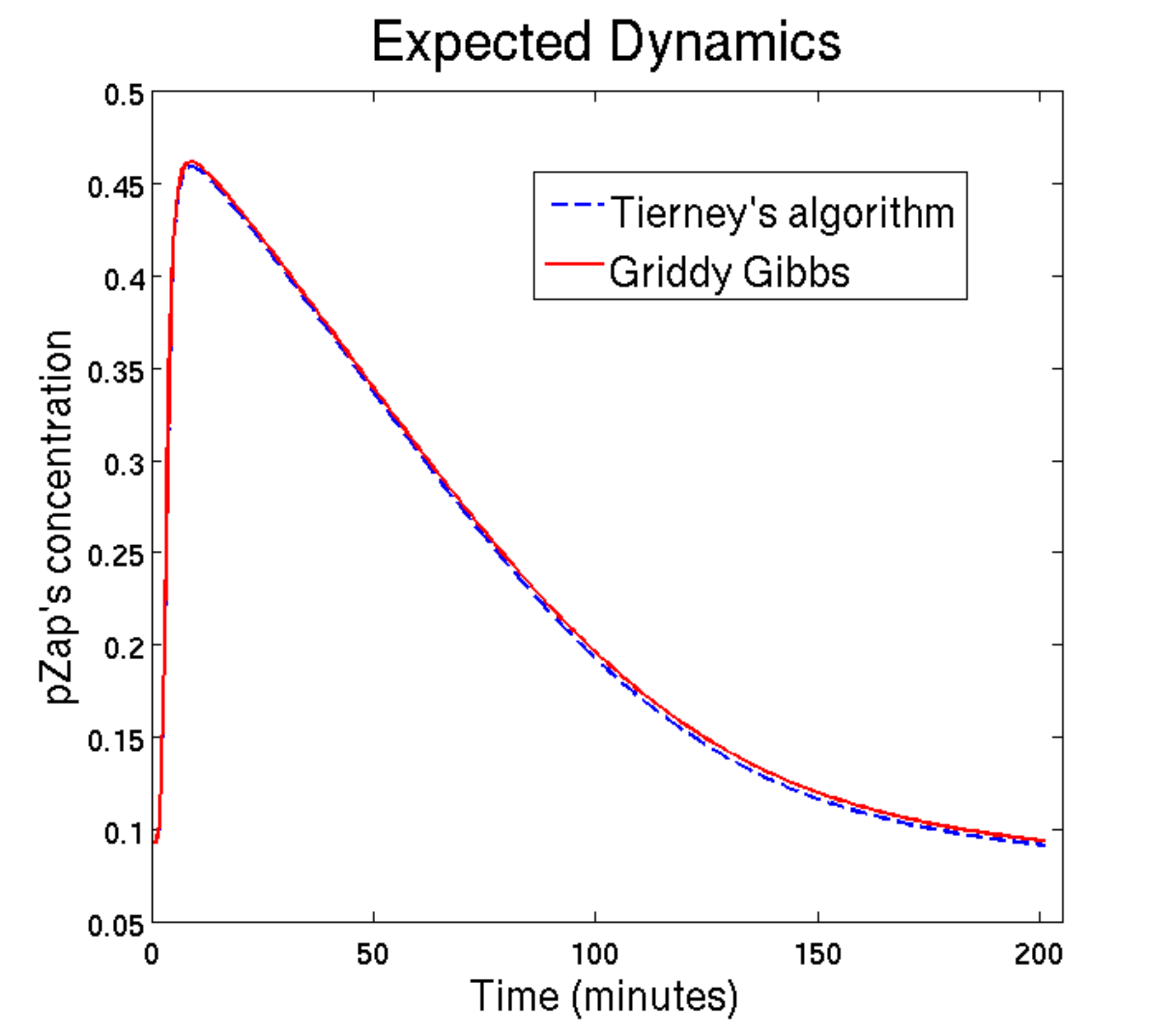}}\quad
\subfigure{\includegraphics[width=0.45\textwidth]{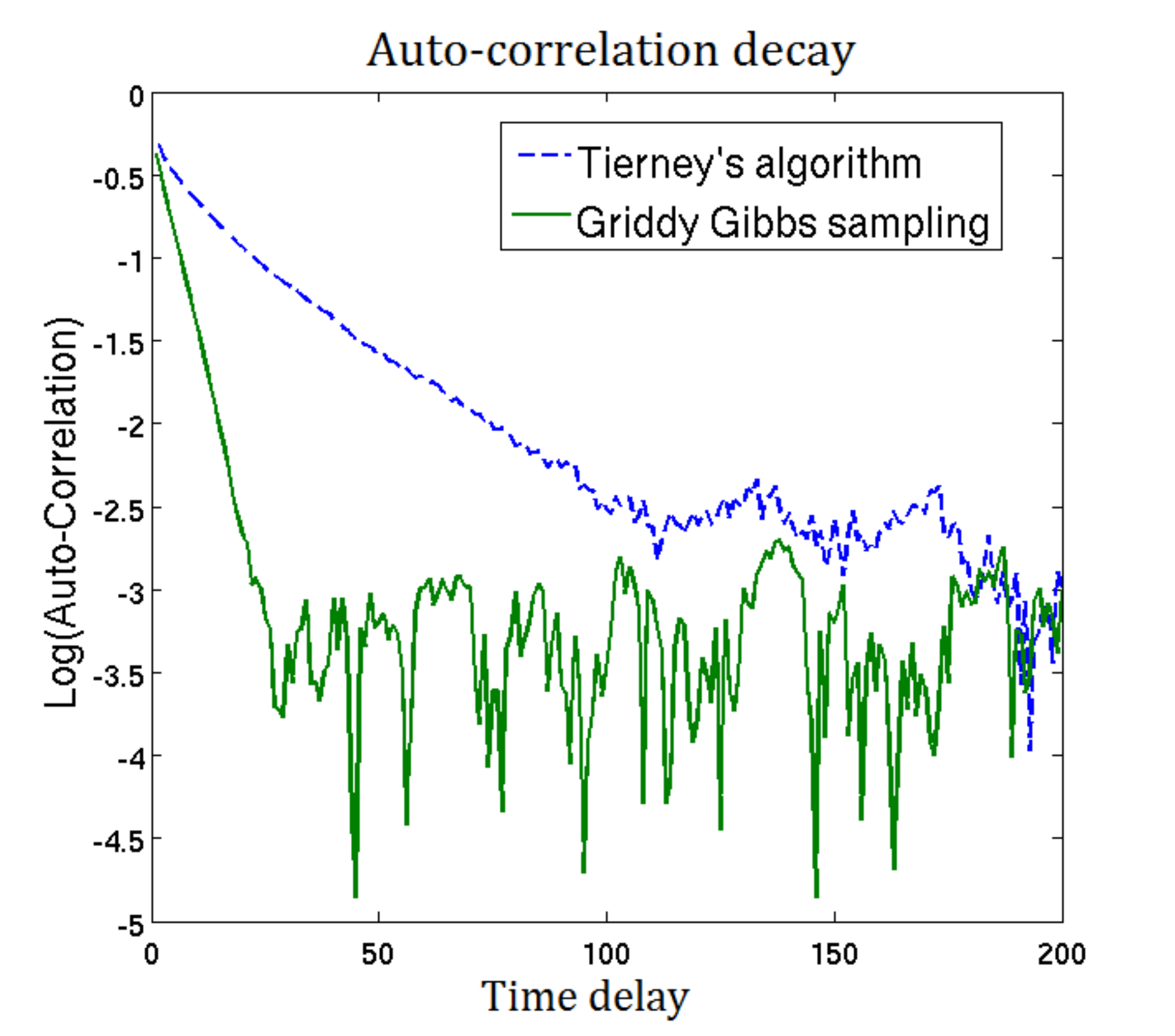} }}
\caption{\footnotesize Left: The expected dynamics estimator based on one data point, generated by Griddy Gibbs and Tierney's samples. Right: Auto-Correlation coefficients of the Markov chains generated by Griddy Gibbs algorithm and Tierney's algorithm.}
\label{EDE}
\end{figure}

\section{Conclusion}

We have shown, subject to some fairly natural conditions, that the Griddy Gibbs method has a unique, invariant measure.  Moreover, we gave $L^p$ estimates on the distance between this invariant measure and the corresponding measure obtained from Gibbs sampling.  These results provide a theoretical foundation for the use of the Griddy Gibbs sampling method. 

Moreover, using the theoretical framework developed to validate the Griddy Gibbs sampling method, we also successfully provided a more general result about the sensitivity of invariant measures under small perturbations on the transition probability. Our results imply that if we replace the transitional probability $P$ of a Monte Carlo Markov chain by a different transitional probability $Q$ that is close to $P$ in $L^p$ norm ($2 \le p \le \infty$), the distance between the two invariant measures (in $L^p$) is bounded by a constant times the $L^p$-distance between $P$ and $Q$,  provided that the approximation schemes satisfy a mild condition provided in the paper. This condition is very general and does not hinder the application of the Griddy Gibbs sampling method, since it can always be guaranteed simply by using additional cutoff functions on the approximation scheme, without significantly affecting its accuracy. The method can be generalized to validate other Monte Carlo Markov chain sampling methods that involve approximation. 

We also gave numerical examples to illustrate our theoretical findings and demonstrate the utility of the method in different applications. The numerical results confirm the linear relation between the distance between the invariant measures and the accuracy of the approximation scheme derived in theory. Moreover, our examples illustrate that Griddy Gibbs performs as well as its variants in applications and that the algorithm is  simpler to implement and less computationally expensive. Additionally, the Markov chains generated by this algorithm have significantly smaller auto-correlation coefficients than those of other variant algorithms. These features demonstrate that Griddy Gibbs is a simple and effective sampling method that can be employed in applications with confidence in its validity.

\section*{Acknowledgement}
All authors of this paper are partially supported by NSF grant DMS-0900277.


\begin{thebibliography}{99}
\bibitem{RT} {Ritter C and Tanner MA.}  
{Facilitating the Gibbs Sampler: the Gibbs Stopper and the Griddy-Gibbs Sampler}. \emph{J. Am. Stat. Assoc.} 1992; {\bf 87}: 861--868.

\bibitem{A9} {Ray B, McCulloch R and  Tsa R.} 
{Bayesian methods for change-point detection in long-range dependent processes.} \emph{Stat. Sinica.} 1997; {\bf 7}: 451--472.

\bibitem{A10} {Barnard J, McCulloch R and Meng X.} 
{ Modeling covariance matrices in terms of standard deviations and correlations, with application to shrinkage.} \emph{Stat. Sinica.} 2000; {\bf 10}: 1281--1311.

\bibitem{YH} {Yang H et al..} 
{ Adaptive sampling for Bayesian geospatial models.} \emph{Statist. Comput.} 2014; 24.6: 1101--1110.

\bibitem{KD1} {Kundu S and David BD.} 
{ Bayes variable selection in semiparametric linear models.} \emph{J. Am. Stat. Assoc.} 2014; 109.505: 437--447.

\bibitem{KD2} {Kundu S and David BD.} 
{ Latent factor models for density estimation.} Biometrika. 2014; 101.3: 641--654.

\bibitem{POW} {Pham TH, Ormerod JT and Wand MP.} 
{ Mean field variational Bayesian inference for nonparametric regression with measurement error.}  \emph{Comput. Stat. Data. An}. 2013; 68: 375-387.

\bibitem{MV} {Mansinghka Vikash et al..} 
{ Approximate bayesian image interpretation using generative probabilistic graphics programs.}  \emph{Advances in Neural Information Processing Systems}. 2013. 

\bibitem{A14} {Ritter C.} 
{Statistical analysis of spectra from electron spectroscopy for chemical analysis.} \emph{J. Roy. Stat. Soc. D-Sta.} 1994; {\bf 43}: 111--127.

\bibitem{D} {Dong W et. al.} 
{Systems Biology of the Clock in Neurospora crassa}. \emph{PLoS ONE}. 2008; 3: e3105.

\bibitem{DRB1} {Dinh V,  Rundell AE and Buzzard GT}.
{Experimental Design for Dynamics Identification of Cellular Processes}. \emph{Bull. Math. Biol.} 2014; 76.3: 597--626.

\bibitem{DRB2} {Dinh V,  Rundell AE and Buzzard GT}.
{Effective sampling schemes for Behavior Discrimination in nonlinear systems.} \emph{International Journal for Uncertainty Quantification}. 2014; 4.6.

\bibitem{A1} {J. Li et. al.}  
{A random-effects Markov transition model for Poisson-distributed repeated measures with non-ignorable missing values.} \emph{Stat. Med.} 2007; {\bf 26}: 2519--2532.

\bibitem{A6} {Ardia D, Hoogerheide L and Dijk H.} 
{Adaptive mixture of Student-t distributions as a flexible candidate distribution for efficient simulation: The R Package AdMit.} \emph{J. Stat. Softw.} 2009; {\bf 29}: 1--32.

\bibitem{A8} {Ausína M and Galeano P.}  
{Bayesian estimation of the Gaussian mixture GARCH model} \emph{Comput. Stat. Data. An.} 2007; {\bf 51}: 2636--2652.

\bibitem{A2} {Boatwright P, McCulloch R, and Rossi P.} 
{Account-level modeling for trade promotion: an application of a constrained parameter hierarchical model.} \emph{J. Am. Stat. Assoc.} 1999; {\bf 94} : 1063--1073.

\bibitem{A11} {Bauwens L  and Rombouts J. } 
{Bayesian inference for the mixed conditional heteroskedasticity model.} \emph{Economet. J.} 2007;{\bf 10}: 408--425.

\bibitem{A13} {Walkera D, Pérez-Barberíab F and Marion G.} 
{Stochastic modelling of ecological processes using hybrid Gibbs samplers.} \emph{Ecol. Model.} 2006;{\bf 198}: 40--52.

\bibitem{A12} {Bauwens L  and Rombouts J.} 
{Bayesian clustering of many GARCH models.} \emph{Economet. Rev.} 2007; {\bf 26}: 365--386.

\bibitem{A3} {Michalopoulou Z and Picarelli M.}
{Gibbs sampling for time-delay and amplitude estimation in underwater acoustics.} \emph{J. Acoust. Soc. Am.}  2005; {\bf 117}: 799--8087.

\bibitem{A4} {Boatwright P, McCulloch R and Rossi P.} {A multivariate time series model for the analysis and prediction of carbon monoxide atmospheric concentrations.} \emph{J. Roy. Stat. Soc. C-App.} 2001; {\bf 50}: 187--200.

\bibitem{A5} {Ray B and Tsa R.}
{Bayesian methods for change-point detection in long-range dependent processes.} \emph{J. Time. Ser. Anal.} 2002; {\bf 23}: 687--705.

\bibitem{A7} {Chen C and Wen Y.} 
{On goodness of fit for time series regression models} \emph{J. Stat. Comput. Sim.} 2001; {\bf 69}: 239--256.

\bibitem{T} {Tierney L.} (1994). 
{ Markov chains for exploring posterior distributions.} \emph{ Ann. Stats.} 1994; {\bf 22}:  1701--1728.

\bibitem{L} {Liu, S.J. et. al.} 
{The Multiple-Try Method and local optimization in Metropolis sampling.} \emph{J. Am. Stat. Assoc.} {\bf 95}, 121--134.

\bibitem{R1} {Roberts GO, Rosenthal JS and Schwartz PO} 
{Convergence properties of perturbed Markov chains.} \emph{JJ. Appl. Probab.}. 1998: 1--11

\bibitem{R2} {Mitrophanov AY.} 
{Sensitivity and convergence of uniformly ergodic Markov chains.} \emph{ J. Appl. Probab.}, 2005: 1003--1014.

\bibitem{R3} {Andrieu C and Roberts GO} (2009). { The pseudo-marginal approach for efficient Monte Carlo computations.} 
\emph{Ann. Stats.} 2009; {\bf 37.2},  697--725.

\bibitem{RR} {Roberts GO and Rosenthal JS.}  {General state space Markov chains and MCMC algorithms.}  \emph{Probab. Surv.} 2004; {\bf 1}: 20--71.

\bibitem{A} {Athreya, K. et. al.} 
{On the convergence of the Markov Chain Simulation Method.} \emph{Ann. Stats.}1996;  {\bf 24}: 69--100.

\bibitem{KS} {Koralov L and Sinai Y. } 
{Theory of probability and random processes.} Springer. 2007.

\bibitem{Riesz} {Lasser R.} 
{Introduction to Fourier Analysis. } CRC Press. 1996

\bibitem{P} {Pedersen M.} 
{Functional analysis in applied mathematics and engineering.} CRC Press. 1999. 

\bibitem{LK} {Lillacci G  and Khammash G. }  {A distribution-matching method for parameter estimation and model selection in computational biology.}  \emph{ Int. J. Robust. Nonlinear Control} 2012; {\bf 22}: 1065--1081.


\bibitem{LB} {Lipniacki T, Hat B, Faeder JR, Hlavacek WS.}  
{Stochastic effects and bistability in T cell receptor signaling.} \emph{J. Theoret. Biol.}. 2008; {\bf 254}: 110--122.

\bibitem{M} {Donahue MM, Buzzard GT  and Rundell AE} (2010). {Experiment design through dynamical characterisation of non-linear systems biology models utilising sparse grids.} \emph{IET System Biology}. 2010; {\bf 4}: 249--262.

\bibitem{GTB} {Buzzard GT}. {Global sensitivity analysis using sparse grid interpolation and polynomial chaos.} \emph{ Reliability Engineering and System Safety}. 2012; {\bf 17}: 82--89.

\bibitem{GW} {Gilks WR, Wild P}. {Adaptive rejection sampling for Gibbs sampling}. \emph{Applied Statistics}. 1992; {\bf 1}: 337--48.




\end{thebibliography}
\end{document}